\theoremstyle{plain}
\newtheorem{thm}[subsection]{Theorem}
\newtheorem{lem}[subsection]{Lemma}
\newtheorem{prop}[subsection]{Proposition}
\newtheorem{cor}[subsection]{Corollary}
\theoremstyle{definition}
\newtheorem{rk}[subsection]{Remark}
\newtheorem{definition}[subsection]{Definition}
\newtheorem{ex}[subsection]{Example}
\numberwithin{equation}{section}
\newcommand{\M}{{\mathcal M}}
\newcommand{\A}{{\mathcal A}}
\newcommand{\T}{\mathbb{T}}
\newcommand{\CC}{{\mathcal C}}
\newcommand{\LL}{{\mathcal L}}
\newcommand{\G}{{\mathcal G}}
\newcommand{\I}{{\mathcal I}}
\newcommand{\X}{{\mathcal X}}
\newcommand{\Z}{\mathbb{Z}}
\newcommand{\R}{\mathcal{R}}
\newcommand{\C}{\mathbb{C}}
\newcommand{\PP}{\mathbb{P}}
\DeclareMathOperator{\Hom}{Hom}
\begin{document}

\title{Admissibility of local systems for some classes of line arrangements}
\dedicatory{Dedicated to the memory of Dinh Thi Anh Thu.}

\author{Nguyen Tat Thang}
\address{Institute of Mathematics,
 18 Hoang Quoc Viet road,
Cau Giay District, 10307 Hanoi, Vietnam.}

\email {ntthang@math.ac.vn}

\subjclass[2010]{Primary 14F99, 32S22, 52C35; Secondary 05A18, 05C40, 14H50.}

\keywords{admissible local system, line arrangement, characteristic variety, multinet, resonance variety}

\begin{abstract}Let $\A$ be a line arrangement in  the complex projective plane $\PP^2$. Denote by $M$ its complement and by $\M$ the set of points in $\A$ with multiplicity at least $3$. A rank one local system $\LL$ on $M$ is admissible if roughly speaking the dimension of the cohomology groups
$H^k(M,\mathcal{L})$ can be computed directly from the cohomology algebra $H^{*}(M,\Bbb{C})$. In this work, we give a sufficient condition for the admissibility of all rank one local systems on $M$.
\end{abstract}

\maketitle

\section{Introduction } \label{s0}

When $M$ is a hyperplane arrangement complement in projective space $\PP^n$,
 the notion of an {\it admissible} local system $\LL$ on $M$ is defined in terms of some
conditions on the residues of an associated logarithmic connection $\nabla(\alpha)$ on a good
compactification of $M$ (see \cite{ESV}, \cite{STV}, \cite{F}, \cite{LY} and \cite{DM}). This notion plays a key
role in the theory, since for such an admissible local system $\mathcal{L}$ on $M$ one has
\begin{align}\label{iso1}
\dim H^k(M, \LL) = \dim H^k(H^{*}(M, \Bbb{C}), \alpha\wedge)
\end{align}
for all $k\in \Bbb{N}$.

Let $\A$ be a line arrangement in the complex projective plane $\mathbb{P}^2$ and denote
its arrangement complement by $M$. For the case of line arrangements,
a good compactification is obtained by blowing-up points of multiplicity larger than $2$ in $\A$. This explains the simple definition of the admissibility given
below in Definition \ref{dl}.

 In a recent paper \cite{NR}, the authors introduced a class of line arrangements for which all rank one local systems on the complements are admissible. Namely, for each non-negative integer $k$, the line arrangement $\A$ is called to be of type $\CC_k$ if $k$ is the minimal number of lines in $\A$ containing all the points of multiplicity at least $3$. It is proved in \cite{NR} that

\begin{thm}\label{thm-NR}
Let $\A$ be a line arrangement in $\PP^2$. If $\A$ belongs to the class $\CC_k$ for
some $k \leq 2$, then any rank one local system $\LL$ on $M$ is admissible.
\end{thm}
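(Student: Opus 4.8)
The plan is to translate Definition \ref{dl} into finitely many residue conditions supported at the points of $\M$, and then to show that the hypothesis $\A\in\CC_k$ with $k\le 2$ always allows these conditions to be met. Recall that $\LL$ determines a monodromy $\lambda_H\in\C^{*}$ around each line $H\in\A$, and that admissibility requires residues $\alpha_H$ with $\exp(2\pi\sqrt{-1}\,\alpha_H)=\lambda_H$ for which the connection $\nabla(\alpha)$ on the blow-up $\wP$ is non-resonant along every divisor. For a line arrangement the dense edges are the lines of $\A$, carrying residue $\alpha_H$, and the points $p\in\M$, carrying residue $\alpha_p=\sum_{H\ni p}\alpha_H$ on the exceptional divisor $E_p$. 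The conditions along the lines can always be arranged, so the first reduction is that admissibility of $\LL$ becomes the solvability of a system of conditions of the form ``$\alpha_p$ avoids the resonant set'' for $p\in\M$, together with the single condition at infinity attached to $\sum_{H\in\A}\alpha_H$. Because the residues $\alpha_H$ are pinned down by $\LL$ only modulo $\Z$, this is a combinatorial feasibility problem in the integer ambiguities.

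Next I would use the geometric input $\M\subseteq L_1\cup L_2$ for two lines $L_1,L_2\in\A$. For $k=0$ there is nothing to prove. For $k=1$ every $p\in\M$ lies on $L_1$, and since two distinct points on $L_1$ determine $L_1$, the only line they share is $L_1$; hence the off-line residues attached to different points of $\M$ are mutually independent, and this freedom is enough to place every $\alpha_p$ outside the resonant set at once. For $k=2$ the points of $\M$ split into those on $L_1$ only, those on $L_2$ only, and the intersection point $L_1\cap L_2$. Now a point on $L_1$ and a point on $L_2$ may share a transversal line, so the off-line residues are no longer independent and the conditions at the various points become coupled through these shared residues as well as through $\alpha_{L_1}$ and $\alpha_{L_2}$.

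The main obstacle, and where the bound $k\le2$ is genuinely used, is this coupling in the resonant case, in which the local monodromy around several points of $\M$ is trivial: the corresponding residues $\alpha_p$ are then forced into $\Z$ and the conditions become rigid integer equations tied together by the shared residues and by the condition at infinity. Conceptually this is the possible appearance of \emph{translated} components of the characteristic variety $\V^{1}(M)$ -- components not passing through the identity, whose jumps in $\dim H^1(M,\LL)$ are invisible to the resonance variety $\R^1(M)$ and so would violate (\ref{iso1}). The crux of the proof is therefore to rule such components out for $\A\in\CC_{\le2}$. Using Arapura's description of the positive-dimensional irreducible components of $\V^{1}(M)$ by pencils of curves (equivalently by multinets on $\A$), I would argue that confining all the relevant base points to $L_1\cup L_2$ leaves no room for the multiple-fiber configurations that create a translation, so that every component of $\V^{1}(M)$ passes through the identity with $\R^1(M)$ as its tangent cone. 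Granting this, for each $\LL$ one may choose a residue representative $\alpha$ lying on the matching component of $\R^1(M)$, whence the jumps of $H^{*}(M,\LL)$ agree with those of the Aomoto complex $(H^{*}(M,\C),\alpha\wedge)$; this is precisely (\ref{iso1}), that is, the admissibility of every $\LL$.
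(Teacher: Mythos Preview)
There is a genuine gap in your argument, and it stems from reversing the logical flow between admissibility and the structure of $\V_1(M)$.

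First, note that the paper does not prove this theorem at all: it is quoted from \cite{NR} as background. The proof there, like the proofs of Theorems~\ref{mainthm} and \ref{thm3} in the present paper, is a \emph{direct construction of residues}: one fixes $a_H\in[0,1)$ for all $H\neq H_0$, sets $a_{H_0}=-\sum_{H\neq H_0}a_H$, and then shows by hand that the integer ambiguities can be adjusted so that every $a(p)$, $p\in\M$, avoids $\Z_{>0}$. The hypothesis $k\le 2$ is what makes this bookkeeping terminate.

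Your route instead tries to deduce admissibility from the absence of translated components of $\V_1(M)$. This does not work, for two reasons.
\begin{itemize}
\item Admissibility in Definition~\ref{dl} is a \emph{residue condition}: the existence of a specific $\alpha$ with $a_H\notin\Z_{>0}$ and $a(p)\notin\Z_{>0}$. The isomorphism~(\ref{iso1}) is a \emph{consequence} of admissibility (via \cite{ESV,STV}), not its definition. Your last sentence identifies the two, but even if you established~(\ref{iso1}) for a given $\LL$ you would not have produced the residues required by Definition~\ref{dl}. In the paper the implication runs the other way: admissibility of all $\LL$ is proved first, and the absence of translated components is then read off (see the last paragraph of the proof of Theorem~\ref{mainthm}, invoking \cite{Di}).
\item Even taking~(\ref{iso1}) as the target, your argument only treats $\LL$ lying on some positive-dimensional component of $\V_1(M)$. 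For a generic $\LL$ off every such component you still need $\dim H^1(M,\LL)=\dim H^1(H^*(M,\C),\alpha\wedge)$, and knowing that all components pass through $1$ says nothing about this. Moreover, ``choose $\alpha$ on the matching component of $\R_1(M)$'' does not guarantee $\exp(\alpha)=\LL$, nor that this particular $\alpha$ satisfies the non-resonance inequalities; components of $\R_1(M)$ are linear subspaces, and a given $\LL$ on the exponential of such a subspace has many preimages $\alpha$, none of which is singled out as admissible by your argument.
\end{itemize}
In short, the multinet/Arapura machinery you invoke is downstream of admissibility, not a substitute for it. To prove the theorem you must actually exhibit, for each $\LL$, a choice of integers $k_H$ so that $a_H-k_H$ and the resulting $a(p)$ all avoid $\Z_{>0}$; the case analysis for $k\le 2$ is where the content lies.
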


The purpose of this paper is to improve the work in \cite{Th}, see Remark \ref{rm1} below. More precisely, we give a combinatoric condition on a line arrangement $\A$ for the admissibility of rank one local systems on its complement $M$. 

Let  $\M$ be the set of points in $\A$ with multiplicity larger than $2$. Two points $x, y\in \M$ are called {\it adjacent} if they belong to a line $H\in \A$ (see \cite{Th}). Suppose that $\A$ satisfies the following condition:

\medskip

(C) For each point $x\in \M$, there exist at most two lines $H_1, H_2\in \A$ such that $x\in H_1\cap H_2$ and $H_1, H_2$ contain all points in $\M$ which are adjacent to $x$.

\medskip

If all points of multiplicity $\geq 3$ are situated on a line the arrangement is a nodal affine arrangement, see \cite{CDP, Th2}. Theorem \ref{thm-NR} above shows that for such arrangements which are called of type $\CC_1$ all rank one local systems on the complement are admissible.

In \cite{Th} the author defines the notion of a {\it path of length $n$} to be the maximal sequence of distinct lines $H_1, H_2, \ldots, H_n$ where $x_i= H_i\cap H_{i+1}\in \M$ and $\# \{x\in H_i : x\in \M\}\geq 2.$  One admits that lines in $\A$ containing only double points also make a path (i.e. the path $\{H\}$).

If a point $x\in \M$ is not adjacent to any point in $\M$ and if $H$ is a line passing through $x$, we consider $\{H\}$ as a path and for all $H^{'}$ passing through $x$, one identifies the path $\{H^{'}\}$ with $\{H\}$. A path is called a {\it cycle} if $H_1\cap H_n\in \M$ with $n\geq 3$, otherwise it is called {\it open} (see \cite{Th}).

 Our first main result is the following.

\begin{thm}\label{mainthm}
Let $\A$ be a line arrangement in $\PP^2$ satisfying condition (C). Assume that $\A$ has at most one cycle. Then all rank one local systems on the complement $M$ of $\A$ are admissible.

In particular, the characteristic variety $\mathcal{V}_1(M)$ does not contain translated components and $\mathcal{V}_1(M)$ is determined by the poset $L(\A)$.
\end{thm}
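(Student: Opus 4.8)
The plan is to establish the two assertions simultaneously by pinning down the structure of $\V_1(M)$ and of the resonance variety $\R_1(M)$, and verifying that they match in the way required for the isomorphism (\ref{iso1}) to hold for every $\LL$. Admissibility of a given $\LL$ is checked through the residue conditions of Definition \ref{dl}, which I would verify by a suitable integral normalization of the residues; once (\ref{iso1}) holds for all $\LL$, the locus where $\dim H^1(M,\LL)>0$ must coincide with $\exp \R_1(M)$, yielding both the absence of translated components and the description of $\V_1(M)$ by the poset $L(\A)$. I would run the whole argument by induction on the number of lines via deletion--restriction, since both condition (C) and the number of cycles are inherited by the relevant subarrangements; the base cases are small arrangements, in particular the types $\CC_0,\CC_1,\CC_2$ supplied by Theorem \ref{thm-NR}.

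First I would recall the decomposition of $\R_1(M)$ into a local part and a non-local part (see \cite{F, LY}): each $x\in\M$ of multiplicity $m$ contributes a local component of dimension $m-1$, while every further component arises from a multinet on a subarrangement. The key combinatorial step is to show that condition (C) forbids every non-local component except those supported on a cycle. Indeed, each base point of a multinet meets lines from at least three distinct classes, whereas (C) confines the points of $\M$ adjacent to a fixed $x$ to two lines through $x$; reconciling these forces the supporting incidences to close up along a path. Hence non-local components of $\R_1(M)$ correspond to cycles of $\A$, and the hypothesis of at most one cycle leaves at most one such component.

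For the generic inductive step I would delete a line $H$ lying on an open path, chosen as a leaf of the path structure, so that $\A'=\A\setminus\{H\}$ again satisfies (C) and has no more cycles than $\A$; deletion only removes incidences, so (C) persists and $\M$ can only shrink. The restriction $\A''$ is a set of points on $H\cong\PP^1$, whose complement is automatically admissible. I would then feed these into the long exact sequences relating $H^{*}(M,\LL)$, $H^{*}(M',\LL')$, $H^{*}(M'',\LL'')$ and into the parallel sequence of Aomoto complexes, and check that (\ref{iso1}) propagates from $\A'$ to $\A$ for every $\LL$. Because only $k=1$ is substantive on $\PP^2$, controlling $H^1$ suffices, and the same sequences show that any component of $\V_1(M)$ not already present for $\A'$ still passes through the identity of $\T$.

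The main obstacle will be the case in which the deleted line lies on the unique cycle, since this is precisely where a translated component could be created and where the two long exact sequences need not split compatibly. I expect to resolve it by analysing the cycle directly: a rank one local system $\LL$ has $H^1(M,\LL)$ detected by the cycle exactly when the product of its local monodromies around the vertices of the cycle is trivial, and this relation cuts out a subtorus \emph{through} the identity of $\T$. Condition (C) guarantees that no second pencil interferes with this cycle, so the resulting component cannot be translated, and (\ref{iso1}) can then be confirmed on it by a direct dimension count against the one non-local component of $\R_1(M)$. Once $\V_1(M)$ is pinned down in this way---only local components together with at most one non-local component, all passing through the identity and all combinatorially determined---the admissibility of every rank one local system and the description of $\V_1(M)$ by $L(\A)$ both follow.
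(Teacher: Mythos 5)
Your proposal does not prove the theorem; the gap is structural, not a missing detail. Admissibility (Definition \ref{dl}) is, by definition, the existence of a lift $\alpha$ of $\LL$ whose residues satisfy $a_j\notin\Z_{>0}$ and $a(p)\notin\Z_{>0}$ at every point of $\M$; it is an arithmetic statement about residues, not a statement about cohomology jump loci. No amount of information about $\V_1(M)$ or $\R_1(\A)$ produces such a lift, and the equality (\ref{iso1}) is a \emph{consequence} of admissibility, not equivalent to it. The step you defer to ``a suitable integral normalization of the residues'' is therefore the entire content of the theorem, and it is exactly what the paper does: after normalizing $a_H\in[0,1)$ for $H\neq H_0$, it partitions $\A$ into zones of maximal graphs (Lemma \ref{lm1}) and runs an explicit correction algorithm, graph by graph and line by line, transferring each positive-integer excess $a(p)$ onto $a_{H_0}$; condition (C) and the absence of a second cycle are used precisely to guarantee that the algorithm terminates with $a_{H_0}\leq 0$ and all $a(p)\notin\Z_{>0}$ (Claims 1--4 of the paper's proof). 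The characteristic variety statement is then deduced \emph{from} admissibility of all systems via \cite{Di}. Your plan runs this implication backwards and is circular: the absence of translated components is what one gets out of admissibility, not a route to it.

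Two further steps of your plan fail on their own terms. First, translated components of $\V_1(M)$ are by definition invisible to $\R_1(\A)$: the tangent cone of $\V_1(M)$ at the identity detects only the components passing through the identity, so classifying the non-local components of $\R_1(\A)$ (by multinets or otherwise) can never rule out translated components; this is the very reason the paper must go through admissibility and \cite{Di}. Second, the deletion--restriction induction does not run for local systems: a long exact sequence relating $H^{*}(M,\LL)$, $H^{*}(M',\LL')$ and $H^{*}(M'',\LL'')$ requires the monodromy of $\LL$ around the deleted line to be trivial, so that $\LL$ extends across it; for a general rank one local system no such sequence exists, hence (\ref{iso1}) cannot be propagated from $\A'$ to $\A$ for every $\LL$. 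Even granting your combinatorial observations that condition (C) and the cycle bound survive deletion, they feed a machine that does not exist.
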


In Section 2 we first make explicit the admissibility condition in the case of line
arrangements and recall the definition of characteristic varieties. Then we prove Theorem \ref{mainthm}. In the end of Section 2 we give example of a line arrangement where the results in \cite{Th} and \cite{NR} can not be applied while Theorem \ref{mainthm} is useful (Example \ref{ex1}).

In Section 3 we concentrate on arrangements having more than one cycle. The mains results in this section are Theorems \ref{mainthm2} and \ref{thm3} where we show that, under some additional assumptions, one still has the admissibility of all local systems. As an evidence, we give in Example \ref{ex2} an arrangement and a nonadmissible local system on its complement. Accordingly, Theorem \ref{mainthm} does not hold if there are more than one cycle, also Theorem \ref{thm3} is not true without the condition (1). That means our results are best possible.

In the last section, we will study the {\it multinets} and {\it resonance varieties}. We will prove that if the line arrangements $\A$ satisfies the condition (C) then it does not support any multinets, equivalently, these is not any {\it global resonance component} except all lines in $\A$ are concurrent.

\section{Admissible rank one local systems} \label{s1}

Let $\A=\{H_0,H_1,...,H_n\}$ be a line arrangement in $\PP^2$ and set $M=\PP^2 \setminus (H_0 \cup...\cup H_n)$. Let $\T(M)=\Hom(\pi_1(M),\C^*)$ be the character variety of 
$M$. This is an algebraic torus
$\T(M) \simeq (\C^*)^{n}$. Consider the exponential mapping
\begin{equation} 
\label{e1}
\exp :H^1(M,\C) \to H^1(M,\C^*)=\T(M)
\end{equation}
induced by the usual exponential function $\exp(2 \pi i -): \C \to \C^*$. 

Clearly one has 
$\exp(H^1(M,\C))=\T(M)$
and  $\exp(H^1(M,\Z))=\{1\}$. More precisely, a rank one local system $\LL\in \T(M)$ corresponds to the choice of some monodromy complex numbers
$\lambda _j \in \C^*$ for $0 \leq j \leq n$ such that $\lambda _0 ...\lambda _n=1$. A cohomology class
$\alpha \in H^1(M,\C)$ is given by
\begin{equation} 
\label{e2}
\alpha=\sum_{j=0,n}a_j\frac {df_j}{f_j}
\end{equation}
where the residues $a_j \in \C$ satisfy $\sum_{j=0,n}a_j=0$ and $f_j=0$ a linear equation for the line $H_j$. With this notation, one has
$\exp (\alpha)=\LL$ if and only if $\lambda _j =\exp(2\pi i a_j)$ for any $j=0,...,n$.

\begin{definition} \label{dl}{\rm 
A local system $\LL \in \T(M)$ as above is {\it admissible} if there is a cohomology class $\alpha \in H^1(M,\C)$ such that $\exp(\alpha)=\LL$, $a_j\notin \Z_{>0}$ for all $j$ and for all point $p \in H_0 \cup...\cup H_n$ of multiplicity at least 3 one has
$$a(p)=\sum_ja_j \notin \Z_{>0}.$$
Here the sum is over all $j$'s such that $p \in H_j$.
}
\end{definition}

For an admissible local system the isomorphism in (\ref{iso1}) were proved in \cite{ESV}, \cite{STV}.

\begin{definition}{\rm 
The {\it characteristic varieties} of $M$ are the jumping loci for the first cohomologies of $M$ with coefficients in rank one local systems:
$$\mathcal{V}^i_k(M)=\{\rho\in \mathbb{T}(M) : \dim H^i(M, \mathcal{L}_{\rho})\geq k\}.$$
When $i = 1$ we use the simple notation $\mathcal{V}_k(M) = \mathcal{V}^1_k (M)$.}
\end{definition}

Foundational results on the structure of the cohomology support loci for local
systems on quasi-projective algebraic varieties were obtained by Beauville \cite{Be}, Green
and Lazarsfeld \cite{GL}, Simpson \cite{S} (for the proper case), and Arapura \cite{A} (for the
quasi-projective case and first characteristic varieties $\mathcal{V}_1(M)$).

Before prove Theorem \ref{mainthm} we introduce the following notion which is a generalization of a path as defined in the Introduction.

\begin{definition}
A subset $\G:= \{H_i\}_{i\in I}\subset \A$ is called a {\it graph} if it satisfies the following conditions:
\begin{enumerate}
\item[(i)] For all $i\in I$ then $\# \{x\in H_i : x\in \M\}\geq 2$;
\item[(ii)] For all $i\in I$, there exists $j\in I\setminus \{i\}$ such that $H_i\cap H_j\in \M$;
\item[(iii)] For any two points $x, y\in \M$ where $x= H_{i_1}\cap H_{i_2}, y= H_{j_1}\cap H_{j_2}$
with some $i_1, i_2, j_1, j_2\in I$, there exists a path $\{H_{k_1}, \ldots, H_{k_m}\}, k_i\in I$ as defined in the Introduction  such that 
$$x= H_{k_1}\cap H_{k_2}, y= H_{k_{m-1}}\cap H_{k_m}.$$
\end{enumerate}
If $H$ contains only one point $x$ in $\M$ which is isolated  (i.e. $x$ is not adjacent to any point in $\M$), we also call $\{H\}$ a graph. If $H$ does not contain any point of $\M$ we also admits that $\{H\}$ is a graph.

The graph $\G$ is said to be {\it maximal} if there does not exist $H\in\A\setminus \G$ such that $H\cup \G$ is a graph.
For each graph $\mathcal{G}$, we denote by $\{x_i\}_{i\in I_{\mathcal{G}}}$ set of points in $\M$ which belong to all lines in the graph. One defines {\it zone} $Z(\mathcal{G})$ associated to $\mathcal{G}$ as follows:
$$Z(\mathcal{G})= \{H\in \A: \exists i\in I_{\mathcal{G}}, x_i\in H\}.$$

\end{definition}

\begin{rk} (i) Every path is a graph.

(ii) One deduces from the hypothesis (C) that the non-isolated point $x\in \M$ determines lines  which contain $x$ and points of $\M$ adjacent to $x$. Then, a graph $\G$ will be characterized by the set of intersection points $x_i\in \M$.
\end{rk}

\begin{lem}\label{lm1}
Let $\A$ be a line arrangement in $\PP^2$ satisfying the condition (C). Then, the set of zones of all maximal graphs makes a partition of $\A$. 
\end{lem}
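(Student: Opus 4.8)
The plan is to verify the two defining properties of a partition, namely that the zones $Z(\G)$ of the maximal graphs cover $\A$ and that distinct zones are disjoint. The hypothesis (C) will enter in one decisive structural fact, from which both the uniqueness of point-membership and the coherence of the zones follow; the delicate step, which I expect to be the main obstacle, is the disjointness, and more precisely the handling of a line that meets $\M$ in two or more points.

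First I would record the consequence of (C) on which everything rests. Fix a non-isolated point $x\in\M$ and let $H_1,H_2$ be the (at most two) lines provided by (C), containing $x$ together with every point of $\M$ adjacent to $x$. I claim that any line $H\in\A$ through $x$ which also contains some other point $y\in\M$ must equal $H_1$ or $H_2$. Indeed, $y$ is then adjacent to $x$, so $y\in H_1\cup H_2$, say $y\in H_1$; if $H\neq H_1$ the two distinct lines $H,H_1$ would share the two points $x$ and $y$, which is impossible in $\PP^2$, so $H=H_1$. Thus the \emph{special lines} carrying the adjacencies at $x$ are exactly $H_1,H_2$, and, as in the remark above, a maximal graph is completely recovered from its point set $\{x_i\}_{i\in I_\G}$. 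Passing to the transitive closure of the adjacency relation on $\M$, the maximal graphs are exactly the resulting connected pieces; in particular every non-isolated point of $\M$ lies in one and only one maximal graph, for if two maximal graphs shared a point they would be joined into a single graph by concatenating paths through that point along its special lines, contradicting maximality. The degenerate graphs $\{H\}$, with $H$ meeting $\M$ in a single isolated point or in no point at all, account for the remaining lines, with the natural convention that $Z(\{H\})=\{H\}$ when $H$ avoids $\M$.

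Next I would establish covering. Let $H\in\A$. If $H$ meets $\M$ in no point it is its own zone by the convention just made. If $H$ contains an isolated point $x\in\M$, then $x$ determines a maximal graph $\G$ with $x\in I_\G$, and every line through $x$, in particular $H$, lies in $Z(\G)$. Finally, if $H$ passes through a non-isolated point $x\in\M$, then $x$ belongs to a unique maximal graph $\G$ by the previous paragraph, and $H\in Z(\G)$ straight from the definition of the zone. Hence $\bigcup_\G Z(\G)=\A$.

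The heart of the matter is disjointness. Suppose $H\in Z(\G)\cap Z(\G')$ for maximal graphs $\G,\G'$; I must force $\G=\G'$. By definition of the zones there are points $x\in I_\G$ and $y\in I_{\G'}$ with $x,y\in H$. If $x=y$, this common point of $\M$ lies in both $I_\G$ and $I_{\G'}$, so $\G=\G'$ by the uniqueness of point-membership established above. If $x\neq y$, then $H$ contains the two distinct points $x,y\in\M$, hence $x$ and $y$ are adjacent; by the consequence of (C) the line $H$ is a special line at $x$ and simultaneously at $y$, so $x$ and $y$, together with all lines through them, belong to the single graph determined by $H$, whence again $\G=\G'$. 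This is exactly the point where (C) is genuinely needed: without the bound on the number of adjacency-carrying lines at a point one loses the identification of a graph with its point set, and a line meeting $\M$ in several points could no longer be attributed coherently to one graph, so the zones might overlap and the stated partition would fail.
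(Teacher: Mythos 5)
Your proof is correct, and it reaches the partition by a somewhat different route than the paper. You first isolate a structural consequence of (C): at a non-isolated point $x\in\M$, any line through $x$ containing a second point of $\M$ must be one of the at most two lines $H_1,H_2$ furnished by (C), since two distinct lines of $\PP^2$ cannot share two points. From this you identify maximal graphs with the connected components of the transitive closure of adjacency on $\M$, and you derive both covering and disjointness from the resulting uniqueness of point-membership. The paper instead argues directly by maximality, with the same case split you use at the end but without the component dictionary: if $H\in Z(\G_1)\cap Z(\G_2)$ with witnessing points $x_1\neq x_2$, then $H\cup\G_1$ and $H\cup\G_2$ are graphs, so maximality forces $H\in\G_1\cap\G_2$, whence $\G_1\cup\G_2$ is a graph and $\G_1=\G_2$; if $x_1=x_2$, the same merging is applied to a line $H_1\in\G_1$ through that point. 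Your approach buys an explicit, reusable description of the maximal graphs, and it treats the covering step and the degenerate conventions more carefully than the paper, which dismisses covering as obvious; the paper's argument is shorter and, notably, never invokes (C) at all. That observation bears on your closing remark: the claim that (C) is ``genuinely needed'' for the partition is not justified. The merging arguments above work for any arrangement, because any two lines through a shared point of $\M$ already meet in a point of $\M$, so graphs sharing a point can always be concatenated regardless of how many adjacency-carrying lines pass through it. What (C) actually secures is the paper's remark preceding Lemma \ref{lm1}, namely that a graph is determined by its set of points of $\M$ --- precisely the fact your special-lines argument establishes --- and it is that fact, rather than the partition property itself, which requires (C) in the proof of Theorem \ref{mainthm}.
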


\begin{proof}
It is obvious that
$$\A= \cup_{\G}Z(\G),$$
where $\G$ runs over all maximal graphs of $\A$. Now, let consider zones associated to two maximal graphs $\G_1$ and $\G_2$. Assume that there exists $H\in Z(\G_1)\cap Z(\G_2)$. That means there are $H_1\in \G_1$ and $H_2\in \G_2$ such that
$$x_1= H\cap H_1\in \M, x_2= H\cap H_2\in \M.$$
If $x_1\neq x_2$ then $H\cup \G_1$ and $H\cup \G_2$ are graphs. Since $\G_i$ is maximal, one obtains that $H\in \G_1$ and $H\in \G_2$. It implies that $\G_1 \cup \G_2$ is also a graph. Hence $\G_1 = \G_2$. If $x_1\equiv x_2$ then $H_1\cup \G_2$ is a graph. It deduces $H_1\in \G_2$. Similarly, one obtains again that $\G_1 = \G_2$. The proof is complete.
\end{proof}

\begin{proof}[Proof of Theorem \ref{mainthm}]
 Let $\mathcal{L}$ be a local system on $M$. In order to find a good cohomology class $\alpha$ for $\mathcal{L}$, we will shape, graph by graph, the positive integer residues $a(p)$ for all $p\in \M$.

Fix one line $H_0$ in the cycle in $\A$ if it exists and any line in $\A$ containing at least two points in $\M$, otherwise. We see from Definition \ref{dl} that the admissibility condition bases on the real part of the residues $a_H, H\in \A$. So instead of those complex residues, we may consider their real parts. It means we can assume that all residues $a_H, H\in \A$ are real numbers. Without loss of generality, we may assume $a_H\in [0, 1)$ for all $H\in \A\setminus \{H_0\}$ and $a_{H_0}= - \sum_{H\neq H_0}a_H$. Recall that for each $x\in \M$ we denote $a(x)= \sum_{H\in \A, x\in H}a_H$. Let $\mathcal{G}$ be any maximum graph.
\medskip

{\bf Case 1: $H_0\notin \mathcal{G}.$} We will correct $a_H, H\in \G$ such that $a(x_i)\notin \Z_{> 0}, i\in I_{\mathcal{G}}$ by several steps.
\medskip

{\bf Step 1:} Start with a line $H_1\in \mathcal{G}$ such that there is only one line $H_2$ in $\mathcal{G}$ having intersection in $\M$ with $H_1$. Such a line exists since there does not exist any cycle in $\mathcal{G}$. Let
$$a_1:= \max\{0, a(p): p\in H_1\cap \M\setminus H_2, a(p)\in \mathbb{Z}_{> 0}\}.$$
Here and below, if the maximum is positive and attains at several points, we will take $a_1$ as the sum $a(p)$ at the point $p$ which is not the intersection point of two lines in $\mathcal{G}$.

In this step, we replace $a_{H_1}$ by $a_{H_1}- a_1$ and $a_{H_0}$ by $a_{H_0}+ a_1.$ It is obviously insures that
$$\sum_{H\in \A}a_H=0$$
and $a(x)\notin \mathbb{Z}_{> 0}$ for all $x\in H_1\cap \M\setminus H_2$. Since $a_1$ is either $0$ or the sum of residues of some distinct lines $H\in Z(\mathcal{G})$ with $H\neq H_0$ one still has $a_{H_0}\leq 0.$

{\bf Step 2:} We continue with the line $H_2$ defined in Step 1. Let
$$a_2:= \max \{0, a(p): p\in H_2\cap \M\setminus (\cup_{H\in \G\setminus \{H_1, H_2\}} H), a(p)\in \mathbb{Z}_{> 0}\}.$$
Denote by $H_2^j, j\in J$ lines in $\mathcal{G}$ satisfying the following conditions:
\medskip 

\centerline{$H_2^j\neq H_1, p_2^j:= H^j_2\cap H_2\in \M, a(p_2^j)\in \mathbb{Z}_{> 0}$ and $a_2< a(p_2^j), \forall j\in J$.}
\medskip 
We consider the following three possibilities.

{\bf (a)} $\# J\geq 2$ and $a_H=0$ for all $H\in \A\setminus \mathcal{G}$ passing through some $p_2^j$: One sees that 
$$a(p_2^j)= a_{H_2}+ a_{H^j_2}\in [0, 2).$$
 It implies $a(p_2^j)=1$ and hence $a(p_1)\notin \mathbb{Z}_{> 0}$ where $p_1= H_1\cap H_2.$ 
Then, we repeat the process from the beginning using the same method as in Step 1 for the maximal graph in $\mathcal{G}\setminus \{H_1\}$ which contains $H_2$ (in this case, it is actually $\mathcal{G}\setminus \{H_1\}$). 

{\bf (b)} $\# J\geq 2$ and there exist $H_2^{'}\in \A\setminus \mathcal{G}, j_0\in J$ such that $p_2^{j_0}\in H^{'}_2$ and $a_{H_2^{'}}\neq 0:$ Let
$$a_2^{'}:= \max\{a(p): p\in H_2\cap \M, a(p)\in \mathbb{Z}_{> 0}\}.$$
We replace $a_{H_2}$ by $a_{H_2}- a_2^{'}$ and $a_{H_2^{'}}$ by $a_{H_2^{'}}+ a_2^{'}.$ Note that this does not change $a(p_2^{j_0})$ but $a(p)\notin \mathbb{Z}_{> 0}$ for all $p\in H_2\cap \M\setminus H_2^{j_0}.$ Since $a_{H^{'}_2}\in (0, 1)$ one still has $a_H\notin \Z_{> 0}$ for all $H\in \A$.

In the next step, we continue with $H_2^j, j\in J$ simultaneously. For each $H_2^j$ we use the same method as we do with $H_2$.

{\bf (c)} $\#J\leq 1$: In this case, we correct residues as follows:
\medskip

\centerline{ $a_{H_2}:= a_{H_2}- a_2$ and $a_{H_0}:= a_{H_0}+ a_2.$}
\medskip
 It is easy to verify that $a(p)\notin \mathbb{Z}_{> 0}$ for all $p\in H_2\cap \M\setminus (\cup_{i\in J}H_2^j)$.

Similarly, in the next step  we process with $H_2^j, j\in J$ simultaneously. For each one, we repeat the method as in Step 2.

We continue the process until the residue of all lines in the graph are corrected (may be changed or not). By the method we replace residues one can easily see that: 
\medskip

{\bf Claim 1:} {\it $a(p)\notin \mathbb{Z}_{> 0}, \forall p\in \cup_{H\in \mathcal{G}} H\cap \M$.}

\medskip

{\bf Claim 2:} {\it $\sum_{H\in \A} a_H= 0.$}

\medskip

In each step, we add to $a_{H_0}$ integer numbers which are either $0$ or positive. In case of positive numbers, each of them has the form as follows:
$$a(p)= \sum_{H\in \A, p\in H}a_H, p\in \M.$$
For $H\in \G$ we denote by $b_H$ the origin residue of $H$ (i.e. before replacements) and for $x\in \M$ denote by $b(x)$ the sum $\sum_{x\in H}b_H$. We shall prove the following.
\medskip

{\bf Claim 3:} {\it The sum $A(\mathcal{G})$ we added to $a_{H_0}$ after correcting residue of all lines in $\mathcal{G}$ is
$$A(\G)= \sum_H b_H,$$
where $H$ runs over some distinct lines in $Z(\mathcal{G})$. Consequently, one has $a_{H_0}\leq 0$.}

\medskip

Before proving Claim 3, we consider what we added to $a_{H_0}$ after first two steps: 
$$A_2:= a_1+a_2.$$
 If $a_1=0$ then $A_2=a_2$ is either $0$ or 
$$A_2= \sum_{H\in Z(\G), x\in H} b_H,$$
for some $x\in \M$. Otherwise, if $a_2= a(p_1),$ where $p_1=H_1 \cap H_2$ then $$a_2= (b_{H_1}- a_1) + \sum_{H\in Z(\G), H\neq H_1, p_1\in H}b_H.$$
Therefore 
$$A_2= \sum_{H\in Z(\G), p\in H}b_H.$$
If $a_2= a(q)$ for some $q\in \M\setminus H_1$, it is easy to see the similar property of $A_2$.

In order to show Claim 3 we write $A(\G)$ as follows:
$$A(\G)= \sum_{i=1}^m \sum_{j=1}^{s_i}a_{i, j}$$
where $a_{i, j}\geq 0, j=1, \ldots, s_i$ denote the integer numbers we added to $a_{H_0}$ in Step $i$ when we correct residues of $H_i^j$ (we rename lines whose residues were corrected in Step $i$ by $H_i^j$) and $m$ is the number of steps we correct residues of all lines in $\G$. 

One reminds that 
$$a_{i, j}= \max \{0, a(x): x\in H_i^j\cap \M\setminus (\cup_{H\in \G\setminus (\cup_{k}H^k_{i-1})\cup H_i^j}H), a(x)\in \Z_{> 0}\}.$$
It means $a_{i, j}$ is either $0$ or 
$$a_{i, j}= \sum_{H\in Z(\G), x\in H} b_H= b(y)$$
with some $y\in H_i^j\cap \M\setminus (\cup_{H\in\G\setminus \{H_i^j\}}H)$ or
$$a_{i, j}= (b_{H_{i-1}^l}- a_{i-1, l})+ \sum_{H\in Z(\G), H\neq H_{i-1}^l, y\in H} b_H$$
where $H_{i-1}^l\in \G$ has intersection in $\M$ with $H_i^j$ and $y= H_{i-1}^l\cap H_i^j\in \M$.  In the last case, we see that $a_{i, j}+ a_{i-1, l}= b(y)$. Note that once we have $a_{i, j}$ in the last form, we also have the associated $a_{i-1, l}$ as a term of $A(\G)$. The corresponding between those terms is one-to-one due to the way of correcting residues. 

 Now, we pair terms in $A(\G)$ as follows: We start with $a_{m, j}, j=1,\ldots, m$. If $a_{m, j}$ has the last form, we pair it with the associated $a_{m-1, l}$, unless we keep it alone. By the same way, we continue with $a_{m-1, j}$ which is not in a pair. We repeat the process until each of $a_{i, j}$'s is either in a pair or has one of the first two forms as above. 
Finally, one obtains that
$$A(\G)= \sum b(y),$$
where the sum takes over some distinct points $y$ of  $\cup_{H\in \G}(H\cap \M)$. It is easy to check that there do not exist  two points in $y$'s belonging to the same line in $\G$. Claim 3 is proved.

Our last claim is the following:
\medskip

{\bf Claim 4:} {\it After replacing all residues we get $B(\G):= \sum_{H\in Z(\mathcal{G})} a_H\geq 0$.}

\medskip

It is the consequence of Claim 3 and the fact $B(\G)= \sum_{H\in Z(\G)} b_H- A(\G)$. Note that  $b_H\geq 0$ for all $H\in\G$.

{\bf Case 2:} $H_0\in \G$. We correct residues $a_H$ of all lines in the graph $\G\setminus \{H_0\}$ (we may choose $H_0$ such that $\G\setminus \{H_0\}$ is a graph) by the same way as in Case 1. By the same argument as above, we also receive properties as in the Claim (1-4).
  
 If $x\in \M$ is isolated then we replace $a_H$ by $a_H- a(x)$ and $a_{H_0}$ by $a_{H_0}+ a(x)$, where $H$ is any line containing $x$.
  
 To complete the proof, we need to check that $a(x)\notin \Z_{> 0}$ for all $x\in H_0\cap \M$. Indeed, if $x$ is not adjacent to any point out of $H_0$, according to Lemma \ref{lm1}, we have
 $$a(x)= -\sum_{x\notin H}a_H= - (\sum_{\I} \sum_H a_H + \sum a_{H^{'}}+ \sum a_{H^{''}}),$$
where $\I$ runs over all graphs of $\A$ which do not contain $H_0$, for each $\I$ then $H$ runs over all lines in its zone; in the second term, $H^{'}$ runs over all lines in the zone of the graph $\G\setminus \{H_0\}$, where $\G$ is the graph containing $H_0$ and the last sum takes over all lines which do not contain any point of $\M$ or contain only one point of $\M$ and have intersection in $\M$ with $H_0$. Each sum is non-negative according to Claim 4. Thus $a(x)\notin \Z_{> 0}.$

If $x$ is adjacent to $y\in H_1^{'}\neq H_0.$ We have
 $$a(x)= -\sum_{x\notin H}a_H= a_{H_1^{'}}- (\sum_{\I} \sum_H a_H + \sum a_{H^{'}}+ \sum a_{H^{''}}).$$
Because $a_{H_1^{'}}< 1$ and the sums are non-negative we obtain $a(x)< 1.$

Finally, we obtain residues of all lines in $\A$ for $\LL$ satisfying all conditions in Definition \ref{dl}. In other words, the local system $\LL$ is admissible. Combining this with results in \cite{Di} we get the properties of the characteristic varieties as shown in the theorem.

\end{proof}

\begin{rk}\label{rm1} 
In \cite{Th}, the author introduced a combinatoric condition for a line arrangement $\A$ such that all rank one local systems on its complement $M$ are admissible.

\begin{thm} [see \cite{Th}]\label{thm-Dinh}
Let $\A$ be a line arrangement in $\PP^2$ satisfying condition (C). Assume that $\A$ has at most one cycle and on each line $H\in \A$, there exist at most two points in $\M$ adjacent to points in $\M\setminus H$. Then, all local systems on complement $M$ of $\A$ are admissible.
\end{thm}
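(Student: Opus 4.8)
The plan is to obtain Theorem \ref{thm-Dinh} as an immediate consequence of the already-established Theorem \ref{mainthm}. An arrangement $\A$ satisfying the hypotheses of Theorem \ref{thm-Dinh} obeys condition (C) and has at most one cycle, which are exactly the hypotheses of Theorem \ref{mainthm}; the additional requirement that on each line $H\in\A$ at most two points of $\M$ are adjacent to points of $\M\setminus H$ only shrinks the class of arrangements under consideration. Hence every $\A$ covered by Theorem \ref{thm-Dinh} is covered by Theorem \ref{mainthm}, and the admissibility of all rank one local systems on $M$ follows at once. This is precisely the sense in which Theorem \ref{mainthm} improves the earlier result.

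For a self-contained argument that does not invoke the stronger theorem, I would reprise the residue-correction scheme. First I would normalize: since Definition \ref{dl} only tests membership of the real sums $a(p)$ in $\Z_{>0}$, I may take every $a_H$ real, fix a base line $H_0$ lying on the cycle when one exists, set $a_H\in[0,1)$ for $H\neq H_0$, and let $a_{H_0}=-\sum_{H\neq H_0}a_H$. By Lemma \ref{lm1} the zones of the maximal graphs partition $\A$, so I would process each maximal graph in turn, transferring the integer excesses $a(p)$, $p\in\M$, onto $a_{H_0}$ line by line.

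The decisive role of the extra hypothesis of Theorem \ref{thm-Dinh} is to simplify this traversal. Because each line $H$ in a graph meets the remainder of the graph in at most two points of $\M$, the branching that forces the trichotomy (cases (a), (b), (c)) of Step 2 in the proof of Theorem \ref{mainthm} cannot arise: every line connects to at most two neighbours through $\M$, so correcting a line involves at most two intersection points. I would then traverse the graph as a tree, starting from a line $H_1$ that meets the rest of the graph in only one point (a leaf, which exists since the graph has no cycle), at each line subtracting the integer excess at its outermost $\M$-point and adding it to $a_{H_0}$, exactly as in Step 1. The invariants to maintain are the four Claims of that proof: $a(p)\notin\Z_{>0}$ for every processed $p$, the total sum stays zero, the amount added to $a_{H_0}$ is a sum of original residues over distinct lines so that $a_{H_0}\leq 0$, and each zone sum $\sum_{H\in Z(\G)}a_H$ remains non-negative.

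The main obstacle, as in the general case, is the bookkeeping behind Claim 3: after shifting an integer off a line onto $H_0$ I must ensure that the point shared with a not-yet-processed neighbour does not itself become a positive integer, while the cumulative shift onto $a_{H_0}$ never turns it positive. Here the at-most-two-points condition is exactly what makes the pairing argument go through without cases, since each processed point matches a unique line and the telescoping identity $A(\G)=\sum b(y)$ over distinct points holds automatically, whence the non-negativity of the zone sums is immediate. The treatment of the graph containing $H_0$, of the isolated points of $\M$, and the final check that $a(x)\notin\Z_{>0}$ for $x\in H_0\cap\M$ then proceed as in Case 2 of the proof of Theorem \ref{mainthm}.
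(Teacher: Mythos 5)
Your proposal is correct and takes essentially the same approach as the paper: the paper states Theorem \ref{thm-Dinh} (attributed to \cite{Th}) precisely as the weaker, earlier result subsumed by Theorem \ref{mainthm}, whose hypotheses are strictly weaker, so the one-line deduction in your first paragraph is exactly the intended logical relationship and involves no circularity, since Theorem \ref{mainthm} is proved independently beforehand. Your supplementary self-contained sketch also correctly mirrors the paper's residue-correction scheme, rightly noting that the extra hypothesis (at most two junction points per line, combined with condition (C)) collapses the trichotomy of Step 2 to case (c) and makes each maximal graph a simple chain.
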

\end{rk}

The following is an example of an arrangement in $\CC_3$ for which both Theorem \ref{thm-NR} and Theorem \ref{thm-Dinh} cannot be applied, yet our new Theorem \ref{mainthm} shows that all local systems are admissible.

\begin{ex}\label{ex1}
Let $\A$ be the arrangement in $\PP^2$ defined by $13$ lines: $L_0: z=0, L_1: x=0, L_2: y=0, L_3: x+3y=3z, L_4: 3y-x=3z, L_5:x+4y=2z, L_6: x- 2y=2z, L_7: x+y= 4z, L_8: 5y-3x=12z, L_9: 2x=z, L_{10}: y=9z, L_{11}: y-x = 7z, L_{12}: y-x= 2z$, see Figure 1, there are no parallel lines here. 

\begin{figure}[ht]
\begin{center}
\includegraphics[scale= 0.6]{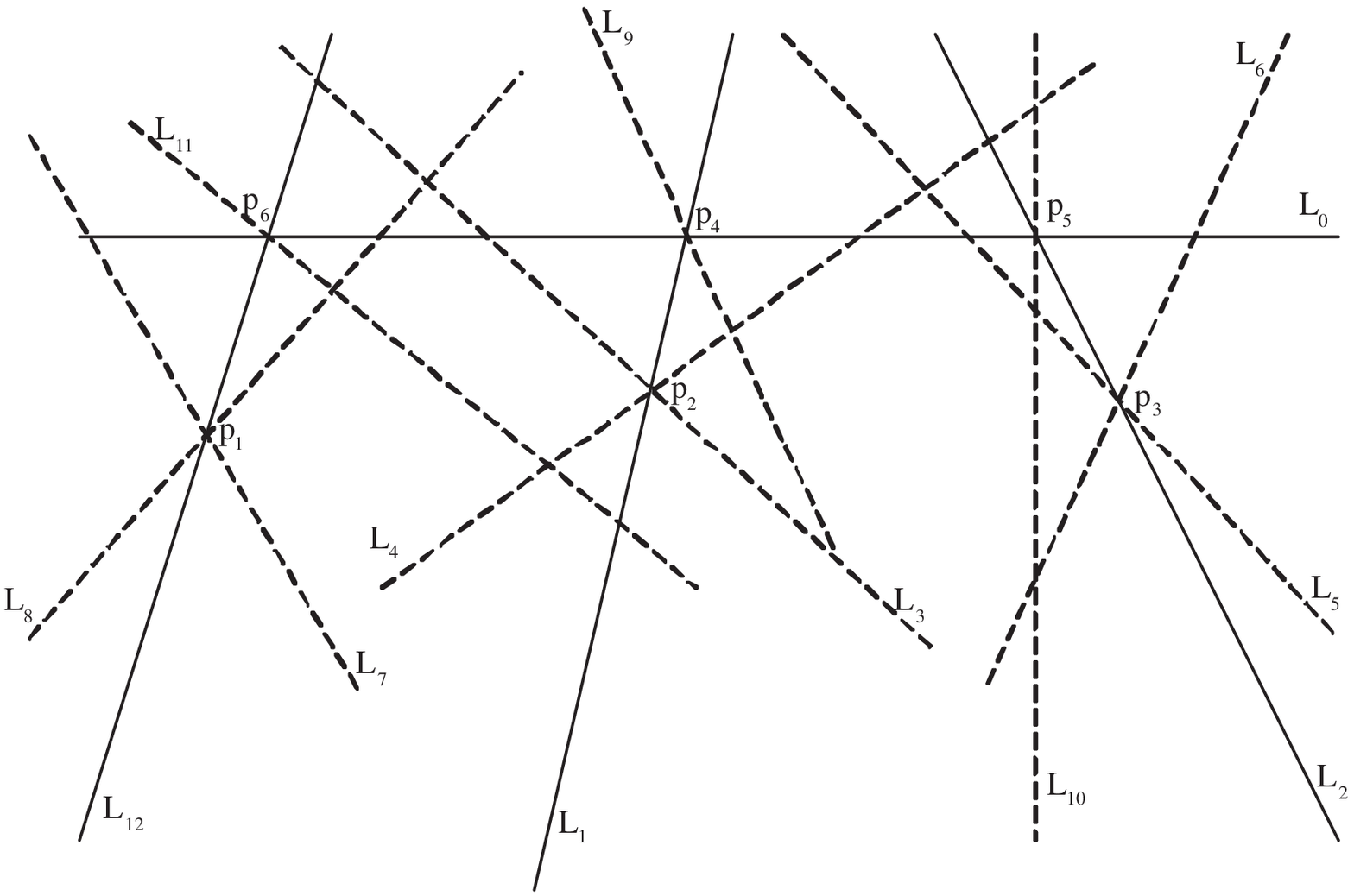}
\end{center}
\caption{}
\end{figure}

There are six points of multiplicity $3$, these are $p_1= [1:3:1]= L_{12}\cap L_7 \cap L_8, p_2= [0:1:1]= L_1\cap L_4\cap L_3, p_3= [2:0:1]= L_6\cap L_2\cap L_5, p_4= [0:1:0]= L_0\cap L_1\cap L_9, p_5= [1:0:0]= L_0\cap L_2\cap L_{10}, p_6= [1:1:0]= L_0\cap L_{11}\cap L_{12}$. 

Since there are $3$ points $p_4, p_5, p_6$ of $L_0$ which are adjacent to other points in $\M\setminus L_0$, the assumptions in Theorem \ref{thm-Dinh} are not all satisfied. Also, since $\A$ is of  type $\CC_3$ then Theorem  \ref{thm-NR} can not be applied. However, one can easily check that the condition (C) defined in the Introduction is fulfilled and there is not any cycle in $\A$. Therefore, according to Theorem \ref{mainthm}, all rank one local systems on the complement of $\A$ are admissible.

\end{ex}

\section{Admissibility for other classes of line arrangements}

In this section, we discuss the case where the arrangement has more than one cycle. One still has the admissibility of  local systems provided some certain assumptions.

\begin{thm}\label{mainthm2}
Let $\A$ be a line arrangement in $\PP^2$ satisfying condition (C). Assume that all cycles in $\A$ have at least one line in common. Then, all rank one local systems on the complement $M$ of $\A$ are admissible.
\end{thm}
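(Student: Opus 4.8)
The plan is to exploit the hypothesis in exactly the way it is designed: since every cycle of $\A$ passes through one common line, deleting that single line destroys all cycles at once and reduces the situation to the acyclic bookkeeping already carried out in the proof of Theorem \ref{mainthm}. Concretely, I would fix $H_0$ to be a line lying on every cycle (when $\A$ has no cycle the hypothesis is vacuous and any line will do, recovering the acyclic subcase of Theorem \ref{mainthm}). As before, since the conditions in Definition \ref{dl} depend only on the real parts of the residues, I may assume all $a_H$ are real and normalize so that $a_H \in [0,1)$ for every $H \neq H_0$ and $a_{H_0} = -\sum_{H \neq H_0} a_H$.

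The first structural step is to describe the maximal graphs after $H_0$ is deleted. Every cycle lies in the maximal graph $\G_0$ containing $H_0$, so each maximal graph other than $\G_0$ is already acyclic, and $\G_0 \setminus \{H_0\}$ contains no cycle at all; hence it breaks into finitely many acyclic maximal subgraphs $\G_0^{(1)}, \ldots, \G_0^{(r)}$. On each acyclic graph that arises this way---be it one of the $\G_0^{(j)}$ or a maximal graph disjoint from $H_0$---I would run verbatim the Case 1 procedure of Theorem \ref{mainthm}: begin at a line meeting the rest of the graph in a single point of $\M$ (a leaf, which exists because the graph is a tree), pass through the three possibilities (a), (b), (c) adjusting residues, and transfer every removed positive integer onto $a_{H_0}$. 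Since each component is acyclic this terminates and yields the analogues of Claims 1--4; the decisive invariant is the analogue of Claim 4, namely that after correction the zone sum $\sum_{H \in Z(\G)} a_H$ is non-negative for every such graph $\G$.

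It remains to check admissibility at $H_0$, and this is where I expect the only real work to lie. For $x \in H_0 \cap \M$ I would use $\sum_{H \in \A} a_H = 0$ to write $a(x) = -\sum_{x \notin H} a_H$ and then split the right-hand side over the zones of the graphs not meeting $H_0$, the zones of the components $\G_0^{(1)}, \ldots, \G_0^{(r)}$, and the remaining lines, exactly as in Case 2 of the proof of Theorem \ref{mainthm}. Condition (C) is essential here: it guarantees that besides $H_0$ at most one further line $H'$ through $x$ reaches another point of $\M$, so that exactly one residue $a_{H'} \in (0,1)$ survives with a positive sign while every grouped zone sum is non-negative; this forces $a(x) \le 0$ when $x$ is adjacent to no point off $H_0$, and $a(x) < 1$ otherwise, so $a(x) \notin \Z_{>0}$ in every case. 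The main obstacle is thus organizational rather than conceptual: with the single tree $\G_0 \setminus \{H_0\}$ of Theorem \ref{mainthm} now replaced by the forest $\G_0^{(1)} \cup \cdots \cup \G_0^{(r)}$, one must reprove the Lemma \ref{lm1} partition and the pairing argument behind Claim 3 component by component, confirming that no positive integer transferred to $a_{H_0}$ is counted twice and that the grouping in the displayed expression for $a(x)$ remains a genuine partition.
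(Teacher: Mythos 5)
Your proposal is correct and takes essentially the same route as the paper: the paper's proof consists precisely of choosing $H_0$ to be the common line of all cycles and rerunning the algorithm of Theorem \ref{mainthm}, since deleting $H_0$ leaves only acyclic graphs. Your write-up simply fills in the bookkeeping (Claims 1--4 on each acyclic component, then the check at $H_0$) that the paper dismisses as straightforward.
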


\begin{proof}
We repeat the algorithm in the proof of Theorem \ref{mainthm} by firstly choosing $H_0$ to be the common line of all cycles in $\A$. The proof is then straightforward.
\end{proof}

Let $\A$ be a line arrangement in $\PP^2$ satisfying the condition (C). Denote by $\A_1$ set of all lines $H\in \A$ such that $H$ contains only one point of $\M$. Note that if $\M\neq\emptyset$ then $\A_1\neq\emptyset$ (unless there exists $x\in \M$ and there are at least three lines passing through $x$ which contain points adjacent to $x$, this contradicts to (C)). Let $\LL\in \T(M)$ be a rank one local system and choose residues $a_H$ as in Definition \ref{dl} for $H\in \A$. For each cycle $\CC= \{H_1, \ldots, H_s\}$ in $\A$, we denote by $P_{\CC}$ the following subset of $\M$:
$$P_{\CC}:=\{p_1, \ldots, p_s\},$$
where $p_j=  H_j\cap  H_{j+1}$ for $j=1, \ldots, s-1$ and $p_s=H_s\cap  H_1$.

\begin{prop}\label{prop1}
Let $\A$ be a line arrangement satisfying the condition (C) and fix a line $H_0$ in $\A$. Assume that for any cycle $\CC$ in $\A$ not involving the line $H_0$ there exists $H\in \A_1$ with $H\cap P_{\CC}\neq \emptyset$ such that $a_H\notin \Z$. Then, the local system $\LL$ is admissible.

\end{prop}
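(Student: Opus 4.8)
The plan is to run the residue-shaping algorithm of Theorem~\ref{mainthm}, using the pendant lines from $\A_1$ furnished by the hypothesis as auxiliary sinks for the cycles that avoid $H_0$. Following that proof, I would first pass to real residues, normalize $a_H\in[0,1)$ for every $H\neq H_0$, and put $a_{H_0}=-\sum_{H\neq H_0}a_H$, so that $\sum_H a_H=0$ holds throughout and $a_{H_0}\le 0$ at the start. I would then treat the maximal graphs one by one. For a graph that is a tree, and for a cycle that contains $H_0$, the arguments of Theorems~\ref{mainthm} and~\ref{mainthm2} apply unchanged: every positive integer defect $a(p)$ is pushed onto $a_{H_0}$, and the pairing argument of Claim~3 keeps $a_{H_0}\le 0$.

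The new case is a cycle $\CC=\{H_1,\dots,H_s\}$ with $H_0\notin\CC$. By hypothesis there is a line $H\in\A_1$ meeting $P_\CC$ at a vertex $p_j=H_j\cap H_{j+1}$ with $a_H\notin\Z$. Two features of this line are decisive. First, since $H\in\A_1$ its only point of $\M$ is $p_j$, so altering $a_H$ changes the sum $a(p)$ at no point other than $p_j$; a graph line, having at least two points of $\M$, could not be used for this without disturbing vertices already corrected. Second, since $a_H\notin\Z$, subtracting any integer from $a_H$ leaves it non-integral, hence outside $\Z_{>0}$. I would therefore cut $\CC$ at $p_j$, so that $H_j$ and $H_{j+1}$ become leaves of a path, and run the open-graph algorithm of Case~1 of Theorem~\ref{mainthm} along this path, securing $a(p_k)\notin\Z_{>0}$ for every cycle vertex $p_k\neq p_j$. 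To close the cycle, if $a(p_j)$ equals a positive integer $k$ at that stage, I would replace $a_H$ by $a_H-k$ and $a_{H_0}$ by $a_{H_0}+k$: this lowers $a(p_j)$ to a non-positive-integer value, preserves $\sum_H a_H=0$, and, because $a_H\notin\Z$, keeps $a_H\notin\Z_{>0}$.

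It then remains to confirm the two global invariants. Every residue stays outside $\Z_{>0}$: the graph lines are handled as before and each pendant $H$ is only decreased by integers, so remains non-integral. And $a_{H_0}\le 0$ persists because every quantity added to $a_{H_0}$---from a tree, from an $H_0$-cycle, or from the closing step---equals a sum of original residues $b_{H'}\ge 0$ over distinct lines of the relevant zone, exactly as in Claims~3 and~4. Since also $a(p)\notin\Z_{>0}$ for all $p\in\M$, the resulting class $\alpha$ satisfies every condition of Definition~\ref{dl}, so $\LL$ is admissible.

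The hardest part will be verifying that routing a cycle's closing defect into its pendant $H$, rather than through the graph to $H_0$, stays compatible with the non-negativity bookkeeping that forces $a_{H_0}\le 0$; concretely, one must check that the defect $a(p_j)$ at the moment of closing is still a sum of distinct original residues, with the pendant residue $b_H$ counted at most once. Here condition~(C) is what keeps the local structure at each vertex under control---limiting the lines through $p_j$ that carry adjacent points of $\M$ to two---so that distinct cycles cannot demand the same pendant and the closing steps for different cycles do not interfere.
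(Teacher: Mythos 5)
Your framework is the right one (normalize residues, process graphs, push integer defects onto $H_0$), but the way you open the cycles is genuinely different from the paper's, and it is the step where the argument breaks. The paper's proof transfers the defect of one cycle line \emph{into} the pendant: it sets $a_{H_{\CC}^1}:=a_{H_{\CC}^1}-a$ and $a_{H_{\CC}}:=a_{H_{\CC}}+a$, a move internal to the zone $Z(\G)$ (so the zone sum is unchanged and $a(p_{\CC})$ is untouched), then deletes $H_{\CC}^1$ and runs the open-graph algorithm on the rest with $p_{\CC}$ regarded as a point of $H_{\CC}^2$; the defect at $p_{\CC}$ is thus absorbed into the ordinary max-push at $H_{\CC}^2$, and Claims 3--4 apply verbatim. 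You instead run the algorithm on the cycle cut at $p_j$ with $p_j$ excluded, and only at the end push the closing defect $k=a(p_j)$ \emph{out of} the pendant into $H_0$. This reversal destroys the bookkeeping you appeal to: the closing push is attached to the point $p_j$, which lies on $H_j$ and $H_{j+1}$, the very lines whose earlier pushes were attached to neighbouring vertices, so two pushed amounts count the residue of a common cycle line --- exactly the ``no two points on the same line'' property that makes Claim 3 work is violated, and Claim 4 ($B(\G)\geq 0$) can fail.

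Concretely: take a triangle $H_1,H_2,H_3$ with vertices $p_1,p_2,p_3$, extra lines $L_1\ni p_1$, $L_2\ni p_2$ and pendant $M\ni p_3$, with residues $b_{H_1}=b_{H_2}=b_{H_3}=0.9$ and $b_{L_1}=b_{L_2}=b_{M}=0.2$. Every vertex sum equals $2$, and your run (from either end of the cut path) pushes $b(p_1)+b(p_3)=4$ onto $H_0$, counting $b_{H_1}$ twice, while $\sum_{H\in Z(\G)}b_H=3.1$; the zone sum ends at $-0.9$. One such deficit is $<1$ and thus harmless, but it accumulates over cycles: add a second disjoint triangle with $b_{H_i'}=0.7$, $b_{L_1'}=b_{L_2'}=b_{M'}=0.6$ (vertex sums $2$, zone sum $3.9$, deficit $0.1$), and let $H_0$ pass through a triple point $x=H_0\cap K_1\cap K_2$ with all other intersections double. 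Then $b(x)=-(3.1+3.9)=-7$, your algorithm pushes $4+4=8$, and the final check gives $a(x)=-\sum_{x\notin H}a_H=1\in\Z_{>0}$; no repair is possible at that stage, since transferring residue between lines through $x$ leaves $a(x)$ fixed. (The paper's routing pushes only $2$ per triangle on the same data and yields $a(x)=-3$.) So the invariant you defer to ``the hardest part'' is not an unverified detail --- it is false for your routing, and the fix is precisely the paper's opposite flow: feed each cycle's defects into its pendant and let $p_{\CC}$ be corrected in the middle of the run, never at the end.
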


\begin{proof}
By the same argument as in the beginning of the proof of Theorem \ref{mainthm}, we may assume that $a_H\in [0, 1)$ for all $H\in \A\setminus \{H_0\}$. Then the condition $a_H\notin \Z$ means $a_H\neq 0$. The idea of the proof is the same as in proof of Theorem \ref{mainthm}, but firstly we open cycles in $\A$.

Let $\G$ be any graph of $\A$. For each cycle $\CC$ in $\G$ which does not contain $H_0$, according to the hypothesis, we can choose a line $H_{\CC}\in \A_1$ such that $a_{H_{\CC}}\in (0, 1)$ and $H_{\CC}$ passes through some point $p_{\CC}= H_{\CC}^1\cap H_{\CC}^2 \in P_{\CC}$, where $H_{\CC}^1, H_{\CC}^2\in \CC$. Let 
$$a:= \max \{0, a(x): x\in H_{\CC}^1\cap \M, a(x)\in \Z_{> 0}\},$$
where $a(x)= \sum_{H\in \A, x\in H}a_H$. Then, in the first step, we replace residues as follows:
$$a_{H_{\CC}^1}:= a_{H_{\CC}^1}- a, a_{H_{\CC}}:= a_{H_{\CC}}+ a.$$ 
After this replacement, we see that $a_{H_{\CC}}\notin \Z_{> 0}$ and $a(x)\notin \Z_{> 0}$ for all $x\in H_{\CC}^1\cap \M\setminus \{p_{\CC}\}$. However $a(p_{\CC})$ and $a_H$ with $H\notin \{H_{\CC}^1, H_{\CC}\}$ do not change.

Now we repeat the process as in the proof of Theorem \ref{mainthm} for each graph in $$\G^{'}:= \G\setminus (\cup_{\CC}\{H_{\CC}^1\}\cup H_0).$$
During the process, we regard $p_{\CC}$ as a point of $\G^{'}$ of multiplicity at least $3$ so that the corresponding residue $a(p_{\CC})$ is corrected when we shape the residue of $H_{\CC}^2$. 

Finally, we obtain new residues with all conditions as in Definition \ref{dl} satisfied.
\end{proof}

\begin{thm}\label{thm3}
Let $\A$ be a line arrangement satisfying the condition (C) and fix a line $H_0$ in $\A$. Assume that for any cycle $\CC$ in $\A$ not involving the line $H_0$ the followings hold:
\medskip

(1) The number of lines in $\CC$ is even;
\medskip

(2) On each line $H\in \CC$, there exist at most two points in $\M$ adjacent to other points in $\M\setminus H$.

Then, all rank one local systems on the complement $M$ of $\A$ is admissible.
\end{thm}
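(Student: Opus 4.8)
The plan is to adapt the residue-correction algorithm from the proof of Theorem \ref{mainthm}, combining it with the cycle-opening trick used in Proposition \ref{prop1}. The key new ingredient is that, under hypotheses (1) and (2), every cycle $\CC$ not through $H_0$ admits a valid way to distribute integer corrections around the cycle so that no point of $P_{\CC}$ ends up with $a(p)\in\Z_{>0}$, even when we cannot appeal to an auxiliary line $H_{\CC}\in\A_1$ as in Proposition \ref{prop1}. I would first reduce, exactly as before, to the case where all residues $a_H$ are real, $a_H\in[0,1)$ for $H\neq H_0$, and $a_{H_0}=-\sum_{H\neq H_0}a_H$, and I would fix $H_0$ as given. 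The admissibility requirements to be met are $a_H\notin\Z_{>0}$ for every $H$ and $a(p)\notin\Z_{>0}$ for every $p\in\M$.

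Next I would treat each maximal graph $\G$ separately, as in Lemma \ref{lm1} the zones partition $\A$. For a graph containing no cycle (or containing $H_0$) the original algorithm of Theorem \ref{mainthm} applies verbatim and yields Claims 1--4, in particular $B(\G)=\sum_{H\in Z(\G)}a_H\ge 0$. The new work is confined to a cycle $\CC=\{H_1,\dots,H_s\}$ with $s$ even, not containing $H_0$. Here hypothesis (2) guarantees that on each $H_j$ at most two of its points of $\M$ are adjacent to points off $H_j$, so each line of the cycle interacts with the rest of the arrangement through a controlled number of vertices; this is what lets the correction propagate consistently around the closed loop. My approach is to open the cycle at one vertex, say $p_s=H_s\cap H_1$, run the path-correction on $H_1,\dots,H_s$ as an open path, and then verify that the ``wrap-around'' vertex $p_s$ also satisfies $a(p_s)\notin\Z_{>0}$. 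The evenness in (1) is precisely what makes the alternating pattern of corrections close up: going around the $s$ vertices, the corrections we subtract and add alternate in a way that returns to a consistent value only when $s$ is even, which is why Example \ref{ex2} with an odd cycle produces a non-admissible system.

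Concretely, I expect the argument at the cycle to run by alternately assigning, at consecutive vertices $p_1,p_3,p_5,\dots$, a correction that forces $a(p_j)=1$ (using that along the cycle $a(p_j)=a_{H_j}+a_{H_{j+1}}+(\text{off-cycle contributions})$, which under (2) is in $[0,2)$ when the off-cycle residues at that vertex vanish or are reabsorbed), while at the intervening vertices the correction is pushed to $H_0$ as a nonnegative integer, exactly as in cases (a)--(c) of Step 2 of Theorem \ref{mainthm}'s proof. Because $s$ is even, the last vertex $p_s$ inherits the value already fixed by the first line $H_1$ rather than conflicting with it, so no contradiction arises at the closure. After the cycle is handled, I would finish the graph and then the whole arrangement exactly as in Theorem \ref{mainthm}, checking $a(x)\notin\Z_{>0}$ for $x\in H_0\cap\M$ via the nonnegativity of the $B(\G)$'s. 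Invoking the results of \cite{Di} then yields the statement on characteristic varieties.

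The main obstacle I anticipate is the closure step at the reconnected vertex $p_s$: one must show that the correction forced on $H_1$ at the start of the path and the correction arriving at $H_s$ at the end are compatible, i.e. that $a(p_s)$ does not become a positive integer. This is exactly where hypothesis (1) (evenness) is essential and where hypothesis (2) is needed to guarantee that the off-cycle residues do not push $a(p_j)$ outside $[0,2)$ and thereby break the alternation. Verifying that the bookkeeping of corrections pushed to $a_{H_0}$ remains a sum of genuine original residues $b_H$ over distinct lines of $Z(\G)$ (the analogue of Claim 3), so that $a_{H_0}\le 0$ still holds and the final nonnegativity argument goes through, is the most delicate part and must be checked cycle by cycle.
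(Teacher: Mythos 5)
Your proposal takes essentially the same route as the paper's proof: reduce to real residues in $[0,1)$ with $a_{H_0}=-\sum_{H\neq H_0}a_H$, run the algorithm of Theorem \ref{mainthm} on graphs that contain $H_0$ or no cycle, and for a bare cycle $\CC=\{H_1,\dots,H_{2k}\}$ avoiding $H_0$ split into the cases where some vertex of $P_\CC$ is already good (open the cycle there), where some $\A_1$-line through $P_\CC$ has nonzero residue (Proposition \ref{prop1}'s opening trick), and the remaining case where every vertex sum is forced to equal exactly $1$, so that evenness lets one decrement alternate lines and push the corrections to $a_{H_0}$ while preserving the Claim 3 accounting. The only cosmetic difference is that the paper performs this last correction as the single simultaneous replacement $a_{H_{2i}}:=a_{H_{2i}}-a(p_{2i-1})$, $a_{H_0}:=a_{H_0}+a(p_{2i-1})$, $i=1,\dots,k$, rather than your sequential propagation around the opened cycle.
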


\begin{proof}
Let $\LL$ be any rank one local system on $M$ with residues $a_H, H\in \A$. Similarly, we may assume that $a_H\in [0, 1)$ for all $H\in \A, H\neq H_0$.

Let $\G$ be a maximal graph. If $\G$ contains $H_0$ or $\G$ does not contain any cycle, we will shape residues of line in $\G\setminus \{H_0\}$ by using the same method as in the proof of Theorem \ref{mainthm}. Since there is not any cycle in $\G\setminus \{H_0\}$ all Claims and argument there hold in this situation. Otherwise, according to the hypothesis, the graph $\G$ is itself a cycle which satisfies conditions (1) and (2) above, namely $\CC= \{H_1, \ldots, H_{2k}\}$. We consider the following possibilities.
\medskip

(i) There exists a point $p\in P_{\CC}$ such that $a(p)= \sum_{H\in \A, p\in H}a_H\notin \Z_{>0}$: Without loss generality, we can assume that $p= H_1\cap H_{2k}$. Then, we repeat the algorithm as in the proof of Theorem \ref{mainthm} for $\CC$ by firstly correcting the residue of $H_1$: put
$$a_1= \max \{0, a(x): x\in H_1\cap \M\setminus H_2, a(x)\in \Z_{> 0}\}.$$
In the first step, replace $a_{H_1}$ by $a_{H_1}-a_1$ and $a_{H_0}$ by $a_{H_0}+ a_1$. We continue the process with $H_2$ until residues of all lines are corrected.

(ii) There exists $H\in \A_1, H\cap P_{\CC}\neq \emptyset$ such that $a_H\neq 0$: Using same method as in proof of Proposition \ref{prop1}.
\medskip

(iii) For all $p\in P_{\CC}$ then $a(p)\in \Z_{>0}$ and for all $H\in \A_1, H\cap P_{\CC}\neq \emptyset$  we have $a_H=0$: In this case, due to $a_H\in [0, 1), H\in \CC$ then for $p\in P_{\CC}$ we obtain $a(p)\in [0, 2)$, hence $a(p)=1$. We replace residues as follows:
$$a_{H_{2i}}:=a_{H_{2i}}- a(p_{2i-1}), a_{H_{0}}:= a_{H_{0}}+a(p_{2i-1}), i=1, \ldots, k,$$
where $p_{2i-1}= H_{2i-1}\cap H_{2i}\in P_{\CC}$. It is easy to see that after those replacements all Claims as in proof of  Theorem \ref{mainthm} remain true.

Thus we get new residues for $\LL$ with all conditions as in Definition \ref{dl} satisfied. In other words $\LL$ is admissible.
\end{proof}

Let $\LL$ be a rank one local system on the complement $M$ of a line arrangement $\A$ and $\lambda_H\in \C^{*}$ for $H\in \A$ be the corresponding monodromy numbers as in Definition \ref{dl}. By the same argument as in the proof of Theorem \ref{thm3} above, one can show the following.

\begin{cor}
Let $\A$ be a line arrangement satisfying the condition (C) and fix a line $H_0$ in $\A$. Assume that for any cycle $\CC$ in $\A$ not involving the line $H_0$, on each line $H\in \CC$, there exist at most two points in $\M$ adjacent to other points in $\M\setminus H$. 

Then, either $\LL$ is admissible or there exists a cycle $\CC$ such that $\lambda_H= -1$ for all $H\in \CC$ and $\lambda_H=1$ for all $H\notin \CC$ which has intersection in $\M$ with some line of $\CC$.

\end{cor}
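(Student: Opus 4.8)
The plan is to follow the proof of Theorem \ref{thm3} almost verbatim, but to track exactly what obstruction arises when the algorithm fails to produce an admissible class. Starting as before, I would normalize the residues so that $a_H\in[0,1)$ for all $H\in\A\setminus\{H_0\}$ and $a_{H_0}=-\sum_{H\neq H_0}a_H$, and process the arrangement graph by graph. Every maximal graph that contains $H_0$, or that contains no cycle, is handled by the algorithm of Theorem \ref{mainthm} and causes no trouble, since all four Claims there apply. So the entire question reduces to a single maximal graph $\G$ which, under the stated hypothesis on adjacency, is forced to be a cycle $\CC=\{H_1,\dots,H_s\}$ not containing $H_0$.

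For such a cycle I would split into the three cases of Theorem \ref{thm3}. In case (i), where some $p\in P_{\CC}$ already satisfies $a(p)\notin\Z_{>0}$, and in case (ii), where some $H\in\A_1$ meeting $P_{\CC}$ has $a_H\neq 0$, the cycle can be ``opened'' exactly as in Theorem \ref{thm3} and Proposition \ref{prop1}, yielding an admissible class and so the first alternative of the corollary. The real content is the remaining case (iii): for all $p\in P_{\CC}$ one has $a(p)\in\Z_{>0}$, and for all $H\in\A_1$ meeting $P_{\CC}$ one has $a_H=0$. Here I would argue that since each $a_H\in[0,1)$ for $H\in\CC$, each vertex sum $a(p)=a_{H_j}+a_{H_{j+1}}$ lies in $[0,2)$, forcing $a(p)=1$ at every $p\in P_{\CC}$. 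The system of equations $a_{H_j}+a_{H_{j+1}}=1$ around the cycle then pins down the residues: consecutive residues alternate as $t,1-t,t,1-t,\dots$, and closing up the cycle forces $t=1-t$, i.e. $t=1/2$, whenever the length $s$ is odd, whereas for even $s$ any common value $t\in(0,1)$ solves the system. Translating back through $\lambda_H=\exp(2\pi i\,a_H)$, the value $a_H=1/2$ gives exactly $\lambda_H=-1$ on every line of $\CC$, and the vanishing condition $a_H=0$ on the adjacent lines in $\A_1$ gives $\lambda_H=1$ there.

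The main obstacle, and the step deserving the most care, is verifying that case (iii) genuinely forces the exceptional monodromy pattern rather than merely permitting it, and that this pattern is stable under the choice of representative $\alpha$. Concretely I must check two things: first, that the alternating solution together with the integrality $a(p)=1$ really does imply $a_H=1/2$ for all $H\in\CC$ (this is where parity of $s$ enters, and where I should confirm that without the evenness assumption of Theorem \ref{thm3}(1) the cycle cannot be opened), and second, that the lines outside $\CC$ meeting $P_{\CC}$ in $\M$ are exactly the elements of $\A_1$ covered by the hypothesis, so that condition (C) guarantees no third line through any $p\in P_{\CC}$ carries hidden residue. I would lean on Remark following the graph definition, which says a graph is characterized by its intersection points, to rule out extraneous lines through the cycle vertices. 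Once these are settled, the dichotomy of the corollary follows: either some case (i)/(ii) applies and $\LL$ is admissible, or we are in case (iii) and the monodromy is pinned to $\lambda_H=-1$ on $\CC$ and $\lambda_H=1$ on the adjacent lines, which is precisely the stated alternative.
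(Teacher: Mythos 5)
Your proposal is correct and is essentially the paper's own argument: the paper proves this corollary in a single line (``by the same argument as in the proof of Theorem \ref{thm3}''), and your reconstruction --- cases (i)/(ii) yield admissibility by opening the cycle, while in case (iii) the forced relations $a_{H_j}+a_{H_{j+1}}=1$ around a cycle of odd length pin down $a_H=1/2$ on every line of $\CC$ and $a_H=0$ on the adjacent lines of $\A_1$, i.e.\ $\lambda_H=-1$ and $\lambda_H=1$ --- is exactly that argument, including the correct identification of where parity enters and where condition (C) is used. The one caveat, which your write-up shares with the paper itself, is that the stated conclusion covers all lines $H\notin\CC$ meeting a line of $\CC$ in $\M$, hence also $\A_1$-lines through points of $\M\setminus P_{\CC}$ lying on cycle lines, and neither your case split (which, like case (ii)/(iii) of Theorem \ref{thm3}, only constrains lines through $P_{\CC}$) nor the paper's one-line proof explicitly handles such lines.
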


\begin{rk}

In the following example, we will see that among arrangements satisfying the condition (C) one can not remove the assumption in Theorem \ref{mainthm} as well as the condition (1) in Theorem \ref{thm3}. 
\end{rk}

\begin{ex}\label{ex2}
Let consider the arrangement $\A$ in $\PP^2$ consists of $12$ lines: $L_1: x=0, L_2: y=0, L_3: x+y-z=0, L_4: x+3y=0, L_5: x-3y-z=0, L_6: 3x-y+z=0, L_7: x-y+2z=0, L_8: 4x+y-12z=0, L_9: x+2y-10z=0, L_{10}: x-y+8z=0, L_{11}: 4x+y+12z=0$ and $L_0: z=0$, this last one is the line at infinity. There are $6$ points of multiplicity at least $3$: $p_1= [0:0:1]= L_1\cap L_2\cap L_4, p_2= [1:0:1]= L_2\cap L_3\cap L_5, p_3= [0:1:1]= L_1\cap L_3\cap L_6, p_4= [2:4:1]= L_7\cap L_8\cap L_9, p_5= [1:1:0]= L_0\cap L_7\cap L_{10}, p_6= [1:~-4:~0]= L_0\cap L_8\cap L_{11}$, see Figure 2, there are two disjoint cycles of length $3$, without any line in common.

We consider the rank one local system $\LL= \exp(\alpha)$, where the cohomology class $\alpha\in H^1(M, \C)$ is given by residues $a_i:= a_{L_i}= 1/2$ for $i\in\{1, 2, 3, 7, 8\}$, $a_j:= a_{L_j}= 0$ for $j\in \{4, 5, 6, 9, 10, 11\}$ and $a_0:= a_{L_0}= -5/2$. We will prove that this local system $\LL$ is not admissible.

\begin{figure}[ht]
\begin{center}
\includegraphics[scale= 0.6]{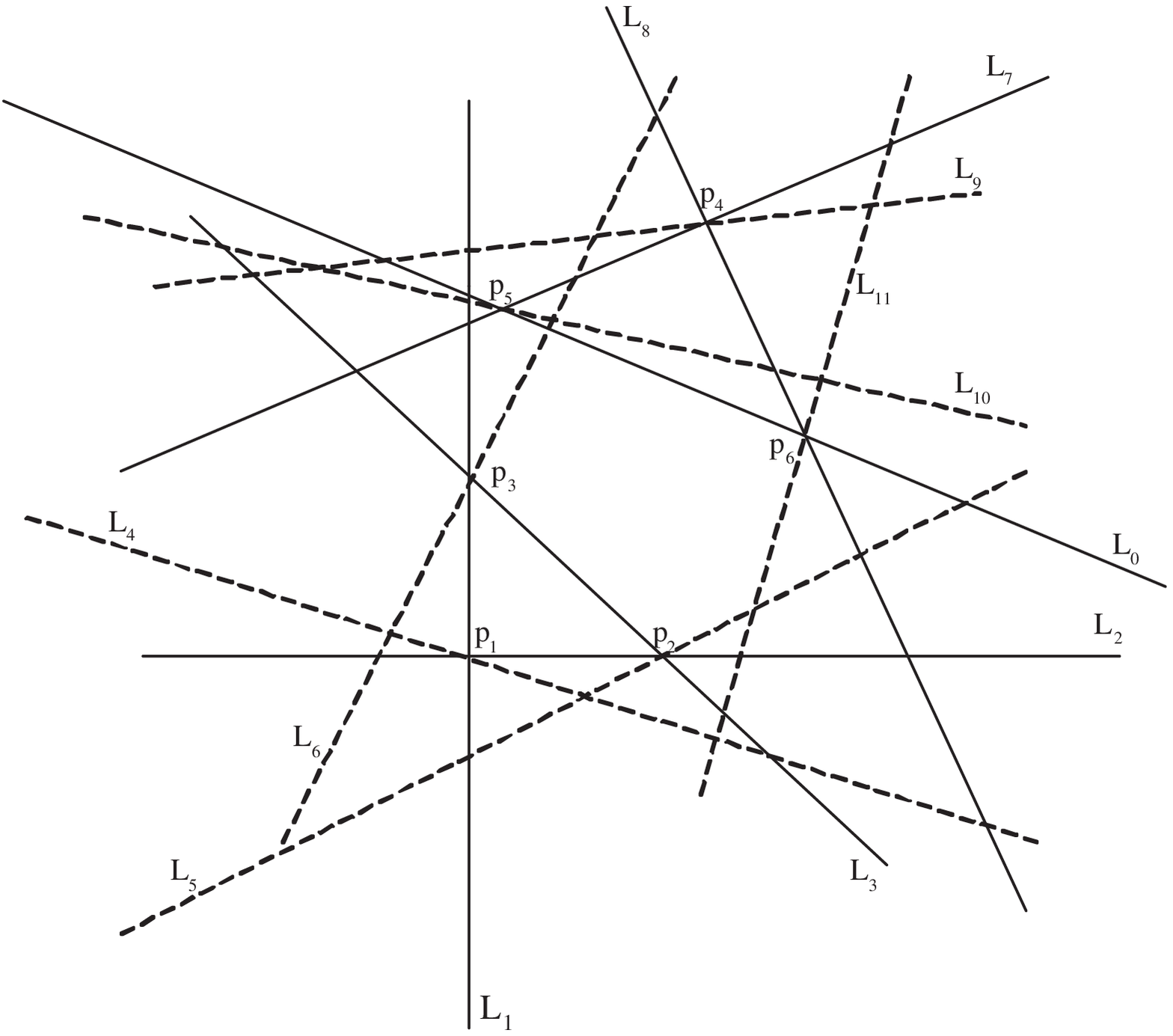}
\end{center}
\caption{}
\end{figure}

Indeed, assume by contradiction that $\LL$ is admissible. It means, there exists a cohomology class $\alpha^{'}\in H^1(M, \C)$ defined by residues $b_i:= b_{L_i}\in \C, i=0, \ldots, 11$ such that $\exp(\alpha^{'})= \LL, \sum_{i=0}^{11}b_i=0, b_i\notin \Z_{> 0}$ for any $i$ and $b(p_j)\notin \Z_{> 0}$ for any $j= 1, \ldots, 6$, where 
$$b(p_j)= \sum_{p_j\in L_k} b_k.$$
It is easy to see that $b_i= k_i+ 1/2$ for $i\in\{0, 1, 2, 3, 7, 8\}$ and $b_j= k_j$ for $j\in \{4, 5, 6, 9, 10, 11\}$ with $k_i\in \Z$ for all $i\in \{0, 1, \ldots, 11\}$. Since $b_i\notin \Z_{> 0}$ we get $k_j\leq 0$ for $j\in \{4, 5, 6, 9, 10, 11\}$.

We have the following equalities:
\begin{align*}
\sum_{i=1}^6b(p_i)&= 2\left(\sum_{i\in\{0, 1, 2, 3, 7, 8\}} b_i\right) + \sum_{j\in \{4, 5, 6, 9, 10, 11\}}b_j\\
                  &= - \sum_{j\in \{4, 5, 6, 9, 10, 11\}}k_j \geq 0.
\end{align*}
In other words $\sum_{i=1}^6b(p_i)\in \Z_{\geq 0}$. Moreover,  one observes that $b(p_i)$ is an integer for each $i= 1, \ldots, 6$. Therefore $b(p_i)=0$ for all $i$ (since $b(p_i)\notin \Z_{> 0}$) and hence $k_j=0$ for all $j\in \{4, 5, 6, 9, 10, 11\}$. In particular $b_1+ b_2= b_2+ b_3= b_1+ b_3=0$ so $b_1= b_2= b_3=0$ which is impossible.

Thus $\LL$ is not admissible.
\end{ex}

\section{Multinets and resonance varieties}

In this section, we will work on the resonance varieties concerning our line arrangements and discuss how these resonance varieties behave. We use  the notion of {\it multinets} which is defined in \cite{FY} where the authors gave the correspondence between the {\it global components} of the resonance varieties and the multinets.

\begin{definition}\label{def:multinet}(\cite{FY})
A {\it $(k, d)$-multinet} on a line arrangement $\A$ is a partition $\A= \cup_{i=1}^k(\A)_i$ of $\A$ 
into $k\geq 3$ subsets, together with an assignment 
of multiplicities, $m\colon \A\to \Z_{\geq 0}$, and a subset 
$\X\subset \M$ of multiple points, called the base locus, 
such that:
\begin{enumerate}
\item $\sum_{H\in\A_i} m_H=d$, independent of $i$;
\item For each $H\in\A_i$ and $H'\in \A_j$ with $i\neq j$, the point 
$H\cap H'$ belongs to $\X$;
\item For each $X\in\X$, the sum $n_X:=\sum_{H\in\A_i\colon H\leq X} m_H$
is independent of $i$;
\item For each $1\leq i\leq k$ and $H,H'\in\A_i$, there is a sequence
$H=H_0,H_1,\ldots, H_r=H'$ such that $H_{j-1}\cap H_j\not\in\X$ for
$1\leq j\leq r$.
\end{enumerate}
\end{definition}

\begin{lem}\label{lemMultinet}
Let $\A$ be  a line arrangements in $\PP^2$ such that the condition $C$ is satisfied. Then, either all lines in $\A$ are concurrent or $\A$ does not support any multinet. 
\end{lem}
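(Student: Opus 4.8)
The plan is to argue by contraposition: assuming $\A$ supports a $(k,d)$-multinet with $k\geq 3$, I will show that if $\A$ is not concurrent then condition (C) must fail. Throughout I would use the standard structural fact that every base point $X\in\X$ is incident to at least one positive-multiplicity line from each of the $k$ classes. This follows from property (3) in Definition \ref{def:multinet}: the common value $n_X$ is $\geq 1$ as soon as $X$ is a genuine base point (two lines of distinct classes meet there by property (2), and in the reduced setting at least one has positive multiplicity), so every class contributes a line through $X$. In particular each base point lies on at least $k\geq 3$ lines, and $\X\neq\emptyset$ since any two lines from distinct classes meet at a point of $\X$.

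The first key step is a local dichotomy for the lines through a fixed base point $X$. Call a line $H\ni X$ \emph{poor} if $X$ is the only point of $\M$ on $H$. If $H\in\A_i$ is poor, then every line $H'$ outside $\A_i$ meets $H$ at a point of $\X\subseteq\M$ lying on $H$, which must be $X$; hence all lines of $\A\setminus\A_i$ pass through $X$. From this I extract two consequences: first, two poor lines through $X$ lying in different classes would force \emph{every} line through $X$, i.e. concurrency, which is excluded, so all poor lines at $X$ lie in a single class; second, if no line through $X$ is poor, then the $\geq 3$ lines through $X$ coming from distinct classes are all rich (each carries a further point of $\M$ adjacent to $X$), which directly contradicts (C), as that condition permits at most two such lines.

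It then remains to treat the case where some line through $X$ is poor, say a line of $\A_1$; by the observation above, all of $\A_2,\ldots,\A_k$ are concurrent at $X$. Since $\A$ is not concurrent there is a line $H_1'\in\A_1$ missing $X$. Choosing any $m\in\A_2$ and setting $Y:=H_1'\cap m$, property (2) makes $Y$ a base point, and $Y\neq X$ because $X\notin H_1'$; hence $Y$ carries a line $m'\in\A_3$. But $\A_3$ is concurrent at $X$, forcing $X\in m'$, so $m'=\overline{XY}=m$, contradicting that $m$ and $m'$ lie in different classes. This contradiction completes the argument, giving the stated alternative.

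I expect the main obstacle to be the thin configurations with $k=3$ and exactly one rich line per class through a base point, where the crude count of rich lines stalls at two and does not by itself violate (C); the resolution is precisely the concurrency-plus-$\overline{XY}$ coincidence in the last step, which converts the mere \emph{existence} of a poor line into a forbidden equality of two lines from different classes. A secondary point requiring care is the multiplicity convention: the argument uses that base points are incident to positive-multiplicity lines of every class, which is where property (3) and the reducedness of the multinet enter, and which is what ties the purely combinatorial hypothesis (C) to the pencil structure underlying the multinet.
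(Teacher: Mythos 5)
Your proof is correct, and its architecture differs from the paper's even though both rest on the same two structural facts: property (2) of Definition \ref{def:multinet} (lines from distinct classes meet inside $\X\subseteq\M$) and the consequence of property (3) that every base point lies on at least one line of each class. (Both you and the paper need positive multiplicities for the latter; the paper's definition writes $m\colon \A\to\Z_{\geq 0}$, but its own proof, like yours, tacitly uses the standard Falk--Yuzvinsky convention $m_H\geq 1$, so this is not a defect of your argument.) The paper works globally: it first extracts three lines $H_1\in\A_1$, $H_2\in\A_2$, $H_3\in\A_3$ from three distinct classes that are not concurrent, then uses (3) to get $H_3'\in\A_3$ through $q_3=H_1\cap H_2$ and $H_2'\in\A_2$ through $q_2=H_1\cap H_3$, and (2) applied to $H_2'\cap H_3'$ shows that the three lines $H_1$, $H_2$, $H_3'$ through $q_3$ each carry a point of $\M$ adjacent to $q_3$, so (C) fails at $q_3$; that is, the paper proves directly that its chosen base point has three rich lines. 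You instead fix an arbitrary base point $X$ and run the poor/rich dichotomy: your rich case coincides with the paper's concluding mechanism, but your poor case is settled by a genuinely different argument, namely a contradiction with the multinet axioms themselves (the forced coincidence $m'=\overline{XY}=m$ of lines from two disjoint classes) rather than a violation of (C). What you gain is that you avoid the paper's somewhat delicate opening step of hunting for a non-concurrent triple spanning three classes; what the paper gains is that no case distinction is needed, since the auxiliary lines $H_3$ and $H_2'$ certify the richness of all three lines at $q_3$ from the outset.
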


\begin{proof}
Suppose that $\A$ supports a multinet $\A= \cup_{i=1}^k\A_i, k\ge 3$ with multiplicities $m\colon \A\to \Z_{\geq 0}$ and lines in $\A$ are not all concurrent. We denote by $\X$ the base locus. 

Let $H_1\in \A_1$ and $H_2\in \A_2$ arbitrary. According to the Condition (2) of Definition \ref{def:multinet} the point $p:= H_1\cap H_2\in \X$. If $p\in H$ for all $H\in \A_i, i>2$ there exists $H^{'}\in \A_1\cup \A_2$ such that $p\notin H^{'}$. Unless, there is at least one line $H\in \A_i$ for some $i>2$ where $p\notin H$. Anyway, there always exist at least $3$ lines belonging to different sets of $\A_i$'s which are not concurrent. Without any loss, we call them by $H_1\in \A_1, H_2\in\A_2, H_3\in\A_3$. According to the Condition(3) of Definition \ref{def:multinet}, number of lines in each $\A_i$ passing through each point of $\X$ are the same. Therefore, there exist $H_3^{'}\in\A$ which passes through the point $q_3:= H_1\cap H_2\in \X$ and $H_2^{'}\in \A_2$ passing through $q_2:= H_1\cap H_3\in \X$. But it implies from the Condition (2) that $H_2^{'}\cap H_3^{'}\in \X$. Hence the Condition (C) fails.
\end{proof}

The (first) {\it resonance varieties} of $\A$ are the jumping loci for the first cohomology of the complex  $H^*(H^*(M,\C), \alpha \wedge)$, precisely:
\begin{equation} \label{e4}
\R_k(\A)=\{\alpha \in H^1(M,\C) \mid \dim H^1(H^*(M,\mathbb{C}), \alpha \wedge)\ge k\}.
\end{equation}

It is proved in \cite{DPS2} that the irreducible components of resonance varieties are linear subspaces in $H^1(M,\C).$ A component $R$ of $\R_1(\A)$ is called a {\it global component} if $R$ is not contained in any coordinate hyperplane (see \cite{FY}).

\begin{thm}
Let $\A$ be  a line arrangements in $\PP^2$ such that the condition $C$ is satisfied. Then $\R_1(\A)$ does not contain any global component except all lines in $\A$ are concurrent.
\end{thm}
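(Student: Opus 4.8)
The plan is to obtain the theorem as an immediate consequence of Lemma \ref{lemMultinet} together with the correspondence, due to \cite{FY}, between global components of the resonance variety and multinets. Recall that by \cite{DPS2} every irreducible component of $\R_1(\A)$ is a linear subspace of $H^1(M,\C)$, and that such a component is \emph{global} precisely when it is not contained in any coordinate hyperplane $\{a_H=0\}$.

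First I would recall the Falk--Yuzvinsky dictionary from \cite{FY}: a global component of $\R_1(\A)$ corresponds to a $(k,d)$-multinet on $\A$ with $k\ge 3$. The fact that the component avoids every coordinate hyperplane forces each line $H\in\A$ to carry a positive multiplicity $m_H$, so that the multinet is genuinely supported on the whole arrangement $\A$ rather than on a proper sub-arrangement. In particular, the existence of even one global component of $\R_1(\A)$ entails that $\A$ supports a multinet in the sense of Definition \ref{def:multinet}.

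It then remains only to invoke Lemma \ref{lemMultinet}, which under condition (C) asserts that either all lines of $\A$ are concurrent or $\A$ supports no multinet. Hence, if the lines of $\A$ are not all concurrent, then $\A$ admits no multinet, and by the correspondence above $\R_1(\A)$ contains no global component; this is exactly the assertion of the theorem, the concurrent case being excluded in the hypothesis. I do not expect any genuine obstacle, since the combinatorial heart of the matter is already contained in Lemma \ref{lemMultinet}; the one point requiring care is the correct reading of \cite{FY}, namely that a component lying off all coordinate hyperplanes really corresponds to a multinet supported on all of $\A$, so that Lemma \ref{lemMultinet}, phrased for multinets on $\A$ itself, applies with no reduction to sub-arrangements.
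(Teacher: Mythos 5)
Your proposal is correct and follows exactly the paper's own argument: the paper likewise obtains the theorem as an immediate corollary of Lemma \ref{lemMultinet} combined with the result of \cite{FY} that a global component of $\R_1(\A)$ yields a $(k,d)$-multinet on $\A$. Your added remark about why such a component forces the multinet to be supported on all of $\A$ is a reasonable clarification but does not change the route.
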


\begin{proof}
This Theorem is a corollary of Lemma \ref{lemMultinet} and the following fact.
\end{proof}

\begin{thm}[\cite{FY}] Suppose that the line arrangement $\A$ in $\PP^2$ supports a global resonance component of dimension $k-1$. Then $\A$ supports a $(k, d)$-multinet for some $d$.
\end{thm}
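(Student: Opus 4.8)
The plan is to realize a global resonance component as the image of a pencil of plane curves and then read the multinet structure directly off the special fibres of that pencil. Let $R\subset H^1(M,\C)$ be a global component of $\R_1(\A)$ with $\dim R=k-1$. By \cite{DPS2} the component $R$ is a linear subspace, and the same circle of ideas (the tangent cone theorem relating $\R_1(\A)$ to the characteristic variety) shows that the exponential image $\exp(R)$ is an irreducible component of $\V_1(M)$ which is a subtorus of $\T(M)$ of dimension $k-1$. Since $R$ is global it is not a local component supported at a single multiple point, and such nonlocal components have dimension at least $2$; hence $k-1\geq 2$, i.e. $k\geq 3$, as required by Definition \ref{def:multinet}.

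Next I would apply Arapura's structure theorem for first characteristic varieties of smooth quasi-projective varieties \cite{A}: every positive-dimensional component of $\V_1(M)$ through the identity has the form $f^{*}\bigl(H^1(S,\C^{*})\bigr)$ for a surjective holomorphic map with connected generic fibre $f\colon M\to S$ onto a smooth curve $S$ of negative (orbifold) Euler characteristic. Because $\exp(R)$ has dimension $k-1$ and $H^1(S,\C^{*})\simeq(\C^{*})^{\dim H^1(S,\C)}$, the curve $S$ must be $\PP^1$ with exactly $k$ special points removed (possibly carrying an orbifold structure), so that $\dim H^1(S,\C)=k-1$. These $k$ distinguished points are what will become the $k$ classes of the multinet.

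The central step is to promote the abstract map $f$ to a pencil of plane curves on $\PP^2$. I would argue that $f$ is given by a ratio $f=g/h$ of two homogeneous polynomials of a common degree $d$, so that $\{\mu g+\nu h=0\}_{[\mu:\nu]\in\PP^1}$ is a pencil $\{C_t\}$ whose base locus lies in $\M$ and whose $k$ reducible or non-reduced members $C_{t_1},\dots,C_{t_k}$ are precisely the fibres over the $k$ removed points. Each $C_{t_i}$ factors as $\prod_H f_H^{\,m_H}$ over the lines $H$ it contains, which simultaneously defines the partition $\A=\bigcup_{i=1}^{k}\A_i$ (declare $H\in\A_i$ when $H$ is a component of $C_{t_i}$) and the multiplicity function $m\colon\A\to\Z_{\geq 0}$; the base locus $\X\subset\M$ is the set of base points of the pencil. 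With this dictionary the four axioms of Definition \ref{def:multinet} become standard facts about pencils: axiom (1) holds because every fibre has degree $d$; axiom (2) because two lines lying in distinct fibres can meet only at a base point; axiom (3) because the local multiplicity of the pencil at a base point $X$ equals the common value $n_X$, independent of the fibre; and axiom (4) is the connectedness of the components of a single fibre through their intersections away from $\X$.

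The hard part will be this last construction. Arapura's theorem supplies only an abstract orbifold fibration, and one must show that for a line-arrangement complement it is cut out by an honest pencil of projective plane curves whose members are supported on the lines of $\A$ together with at most a fixed residual divisor, and that the resulting multiplicities $m_H$ are well defined and satisfy axiom (3). Controlling these multiplicities at the base points, and ruling out spurious fibre components not among the lines of $\A$, is exactly the technical heart carried out in \cite{FY}; once the pencil is in hand, matching $\dim R=k-1$ with the $k$-fold partition and checking the axioms is bookkeeping.
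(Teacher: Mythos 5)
The paper itself contains no proof of this statement: it is quoted from \cite{FY} and used as a black box (together with Lemma \ref{lemMultinet}) to derive the preceding theorem. So there is nothing in the paper to compare your argument against, and your proposal has to stand on its own as a proof of the Falk--Yuzvinsky theorem. It does not stand, and you essentially say so yourself: the step you call ``the hard part'' --- showing that the abstract orbifold fibration $f\colon M\to S$ furnished by Arapura's theorem is induced by a genuine pencil of plane curves of some degree $d$, that every line of $\A$ occurs as a component of one of the $k$ special fibres, that those fibres are completely reducible (no components other than lines of $\A$), and that the resulting multiplicities $m_H$ are well defined and satisfy axiom (3) at the base points --- is the entire mathematical content of the theorem, and you dispose of it by citing \cite{FY}. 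A proof of the theorem of \cite{FY} cannot invoke \cite{FY} for its central step; as written the argument is circular. Everything before that step (the tangent cone theorem, Arapura's classification \cite{A}, the identification of $S$ with $\PP^1$ minus $k$ points) is legitimate set-up but does not touch the multinet axioms, and everything after it (``bookkeeping'') presupposes the missing construction.

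Two further remarks. First, your claim that a global component must have dimension at least $2$ (whence $k\geq 3$) is asserted rather than proved; the correct statement, due to Libgober and Yuzvinsky \cite{LY}, is that every irreducible component of $\R_1(\A)$ is an \emph{isotropic} linear subspace of $H^1(M,\C)$ of dimension at least $2$. Second, that isotropy is not a side remark: it is the engine of the actual construction. The route in \cite{FY} (building on \cite{LY}) does not pass through $\V_1(M)$ and Arapura's theorem at all: two independent classes in the isotropic component $R$ correspond to logarithmic $1$-forms $\omega_1,\omega_2$ with $\omega_1\wedge\omega_2=0$, so $\omega_2=\varphi\,\omega_1$ for a rational function $\varphi$ on $\PP^2$, and $\varphi$ is precisely the ratio of two members of the desired pencil; the completely reducible fibres, the partition of $\A$ and the multiplicities are then read off and checked against axioms (1)--(4). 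This Castelnuovo--de Franchis-type argument is more elementary and stays inside the algebraic geometry of $\PP^2$, which is exactly what makes the ``promotion to a pencil'' tractable. If you want to salvage your route through the characteristic variety, the missing work is to extend $f$ to a rational map $\PP^2\dashrightarrow\PP^1$, rule out fibre components not supported on $\A$, and verify axiom (3); until that is carried out, the proposal is an outline of a proof, not a proof.
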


\section*{Acknowledgments} The author would like to thank professor Alexandru Dimca for useful discussions and comments. The author would like to thank professor Sergey Yuzvinsky for the valuable comment about multinets and resonance varieties.

\end{document}